\newtheorem{theorem}{Theorem}
\newtheorem{lemma}[theorem]{Lemma}
\newtheorem{remark}[theorem]{Remark}
\newtheorem{definition}[theorem]{Definition}
\begin{document}

\noindent{\Large
One-generated nilpotent Novikov algebras}
\footnote{
This work was supported by  
RFBR 18-31-20004; FAPESP  18/15712-0, 	19/00192-3;
Ministerio de Econom\'{i}a y Competitividad (Spain), grant MTM2016-79661-P (European FEDER support included, UE).
The authors thank  Farukh Mashurov   and the referee of the paper for   constructive comments.
}

   \

\

   {\bf  
   Luisa M. Camacho$^{a}$,
   Iqboljon Karimjanov$^{b},$ 
   Ivan   Kaygorodov$^{c}$ \&
   Abror Khudoyberdiyev$^{d}$}

 \

{\tiny

$^{a}$ University of Sevilla, Sevilla, Spain.

$^{b}$ Andijan State University, Andijan, Uzbekistan.

$^{c}$ CMCC, Universidade Federal do ABC, Santo Andr\'e, Brazil.

$^{d}$ National University of Uzbekistan, Institute of Mathematics Academy of
Sciences of Uzbekistan, Tashkent, Uzbekistan.

\smallskip

   E-mail addresses:

\smallskip

Luisa M. Camacho (lcamacho@us.es)

Iqboljon Karimjanov (iqboli@gmail.com)

Ivan   Kaygorodov (kaygorodov.ivan@gmail.com)

Abror Khudoyberdiyev (khabror@mail.ru)

}

\

\noindent{\bf Abstract}: {\it We give a classification of $5$- and  $6$-dimensional complex one-generated nilpotent Novikov algebras.}

\

\

\noindent {\bf Keywords}:
{\it Novikov algebra, nilpotent algebra, algebraic classification, central extension.}

\

\

\noindent {\bf MSC2010}: 17D25, 17A30.

\

\

\section*{Introduction}

The algebraic classification (up to isomorphism) of algebras of dimension $n$ from a certain variety
defined by some family of polynomial identities is a classic problem in the theory of non-associative algebras.
There are many results related to the algebraic classification of low-dimensional algebras in the varieties of
Jordan, Lie, Leibniz, Zinbiel and many other algebras \cite{ack, cfk19, ck13,gkk, gkks, degr3, usefi1, degr1, degr2, ha16, hac18, kv16, krh14}.
Another interesting direction is a study of one-generated objects.
The description of one-generated groups is well-known: there is only one one-generated group of order $n$.
    In the case of algebras, there are some similar results,
    such as the description of $n$-dimensional one-generated nilpotent associative \cite{karel}, noncommutative Jordan \cite{jkk19},  Leibniz and Zinbiel algebras\cite{bakhrom}. 
    It was proven that there is only one $n$-dimensional one-generated nilpotent algebra in these varieties.
    But on the other hand, as we can see in varieties of Novikov \cite{kkl18}, assosymmetric \cite{ikm19},  bicommutative \cite{kpv19}, commutative \cite{fkkv}, and terminal \cite{kkp19geo} algebras, there are more than one $4$-dimensional one-generated nilpotent algebra from these varieties.
    It is easy to see that a nilpotent one-generated algebra
is a central extension of a nilpotent one-generated algebra of a smaller dimension.
In the present paper, we give an algebraic classification of
$5$- and $6$-dimensional one-generated nilpotent Novikov  algebras
introduced by Novikov and Balinsky in 1985 \cite{bn85}.

The variety of Novikov algebras is defined by the following identities:
\[
\begin{array}{rcl}
(xy)z &=& (xz)y,\\
(xy)z-x(yz) &=& (yx)z-y(xz).
\end{array} \]
It contains the commutative associative algebras as a subvariety.
On the other hand, the variety of Novikov algebras is the intersection of
the variety of right commutative algebras (defined by the first Novikov identity)
and
the variety of left symmetric (Pre-Lie) algebras
(defined by the second Novikov identity).
Also, any Novikov algebra under the commutator multiplication is a Lie algebra \cite{bd06, bd08},
and Novikov algebras are related with
Tortken and
Novikov-Poisson algebras  \cite{dz02, xu97, bokut}.
The systematic study of Novikov algebras started after the paper by Zelmanov where
all complex finite-dimensional simple Novikov algebras  were found \cite{ze87}.
The first nontrivial examples of  infinite-dimensional simple  Novikov algebras were constructed in \cite{fi89}.
Also, the simple Novikov algebras were described in infinite-dimensional case and over fields of positive characteristic \cite{os94, xu96, xu01}.
The algebraic classification of $3$-dimensional Novikov algebras was given in \cite{bc01},
and for some classes of $4$-dimensional algebras, it was given in \cite{bg13};
the geometric classification of $3$-dimensional Novikov algebras was given in \cite{bb14}
and of $4$-dimensional nilpotent Novikov algebras in \cite{kkk18}.
Many other pure algebraic properties were studied in a series of papers by Dzhumadildaev  \cite{di14, dt05, dz11, dl02}.

Our method to classify nilpotent Novikov algebras is based on the calculation of central extensions of smaller nilpotent algebras from the same variety.
The algebraic study of central extensions of Lie and non-Lie algebras has been an important topic for years \cite{omirov,ha17,hac16,kkl18,ss78,krs19,zusmanovich}.
First, Skjelbred and Sund used central extensions of Lie algebras for a classification of nilpotent Lie algebras  \cite{ss78}.
After that, using the method described by Skjelbred and Sund,  all  non-Lie central extensions of  all $4$-dimensional Malcev algebras were described \cite{hac16}, and also
all  non-associative central extensions of $3$-dimensional Jordan algebras \cite{ha17},
all  anticommutative central extensions of the $3$-dimensional anticommutative algebras \cite{cfk182},
and all  central extensions of the $2$-dimensional algebras \cite{cfk18}.
Note that the method of central extensions is an important tool in the classification of nilpotent algebras,
which was used to describe
all  $4$-dimensional nilpotent associative algebras \cite{degr1},
all  $5$-dimensional nilpotent Jordan algebras \cite{ha16},
all  $5$-dimensional nilpotent restricted Lie algebras \cite{usefi1},
all  $5$-dimensional nilpotent associative commutative algebras \cite{kpv19},
all  $6$-dimensional nilpotent Lie algebras \cite{degr3,degr2},
all  $6$-dimensional nilpotent Malcev algebras \cite{hac18},
all  $6$-dimensional nilpotent anticommutative algebras \cite{kkl19},
all  $8$-dimensional dual Mock Lie algebras \cite{lisa},
and some others.

\section{The algebraic classification of nilpotent Novikov algebras}
\subsection{Method of classification of nilpotent algebras}
Throughout this paper, we use the notations and methods well written in \cite{ha17,hac16,cfk18},
which we have adapted for the Novikov case with some modifications.
Further in this section we give some important definitions.

Let $({\bf A}, \cdot)$ be a Novikov  algebra over  $\mathbb C$
and $\mathbb V$ a vector space over ${\mathbb C}$. The $\mathbb C$-linear space ${\rm Z^{2}}\left(
\bf A,\mathbb V \right) $ is defined as the set of all  bilinear maps $\theta  \colon {\bf A} \times {\bf A} \longrightarrow {\mathbb V}$ such that
\[ \theta(xy,z)=\theta(xz,y), \]
\[ \theta(xy,z)-\theta(x,yz)= \theta(yx,z)-\theta(y,xz). \]
These elements will be called {\it cocycles}. For a
linear map $f$ from $\bf A$ to  $\mathbb V$, if we define $\delta f\colon {\bf A} \times
{\bf A} \longrightarrow {\mathbb V}$ by $\delta f  (x,y ) =f(xy )$, then $\delta f\in {\rm Z^{2}}\left( {\bf A},{\mathbb V} \right) $. We define ${\rm B^{2}}\left({\bf A},{\mathbb V}\right) =\left\{ \theta =\delta f\ : f\in {\rm Hom}\left( {\bf A},{\mathbb V}\right) \right\} $.
We define the {\it second cohomology space} ${\rm H^{2}}\left( {\bf A},{\mathbb V}\right) $ as the quotient space ${\rm Z^{2}}
\left( {\bf A},{\mathbb V}\right) \big/{\rm B^{2}}\left( {\bf A},{\mathbb V}\right) $.

\

Let $\operatorname{Aut}({\bf A}) $ be the automorphism group of  ${\bf A} $ and let $\phi \in \operatorname{Aut}({\bf A})$. For $\theta \in
{\rm Z^{2}}\left( {\bf A},{\mathbb V}\right) $ define  the action of the group $\operatorname{Aut}({\bf A}) $ on ${\rm H^{2}}\left( {\bf A},{\mathbb V}\right) $ by $\phi \theta (x,y)
=\theta \left( \phi \left( x\right) ,\phi \left( y\right) \right) $.  It is easy to verify that
 ${\rm B^{2}}\left( {\bf A},{\mathbb V}\right) $ is invariant under the action of $\operatorname{Aut}({\bf A}).$  
 So, we have an induced action of  $\operatorname{Aut}({\bf A})$  on ${\rm H^{2}}\left( {\bf A},{\mathbb V}\right)$.

\

Let $\bf A$ be a Novikov  algebra of dimension $m$ over  $\mathbb C$ and ${\mathbb V}$ be a $\mathbb C$-vector
space of dimension $k$. For $\theta \in {\rm Z^{2}}\left(
{\bf A},{\mathbb V}\right) $, define on the linear space ${\bf A}_{\theta } = {\bf A}\oplus {\mathbb V}$ the
bilinear product `` $\left[ -,-\right] _{{\bf A}_{\theta }}$'' by $\left[ x+x^{\prime },y+y^{\prime }\right] _{{\bf A}_{\theta }}=
 xy +\theta(x,y) $ for all $x,y\in {\bf A},x^{\prime },y^{\prime }\in {\mathbb V}$.
The algebra ${\bf A}_{\theta }$ is called an $k$-{\it dimensional central extension} of ${\bf A}$ by ${\mathbb V}$. One can easily check that ${\bf A_{\theta}}$ is a Novikov 
algebra if and only if $\theta \in {\rm Z^2}({\bf A}, {\mathbb V})$.

Call the
set $\operatorname{Ann}(\theta)=\left\{ x\in {\bf A}:\theta \left( x, {\bf A} \right)+ \theta \left({\bf A} ,x\right) =0\right\} $
the {\it annihilator} of $\theta $. We recall that the {\it annihilator} of an  algebra ${\bf A}$ is defined as
the ideal $\operatorname{Ann}(  {\bf A} ) =\left\{ x\in {\bf A}:  x{\bf A}+ {\bf A}x =0\right\}$. Observe
 that
$\operatorname{Ann}\left( {\bf A}_{\theta }\right) =(\operatorname{Ann}(\theta) \cap\operatorname{Ann}({\bf A}))
 \oplus {\mathbb V}$.

\

The following result shows that every algebra with a non-zero annihilator is a central extension of a smaller-dimensional algebra.
As a consequence, it is easy to see that
a one-generated nilpotent algebra is a central extension of a one-generated
nilpotent algebra of a smaller dimension.

\begin{lemma}
Let ${\bf A}$ be an $n$-dimensional Novikov algebra such that $\dim (\operatorname{Ann}({\bf A}))=m\neq0$. Then there exists, up to isomorphism, a unique $(n-m)$-dimensional Novikov  algebra ${\bf A}'$ and a bilinear map $\theta \in {\rm Z^2}({\bf A}, {\mathbb V})$ with $\operatorname{Ann}({\bf A})\cap\operatorname{Ann}(\theta)=0$, where $\mathbb V$ is a vector space of dimension m, such that ${\bf A} \cong {{\bf A}'}_{\theta}$ and
 ${\bf A}/\operatorname{Ann}({\bf A})\cong {\bf A}'$.
\end{lemma}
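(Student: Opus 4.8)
The plan is to follow the classical Skjelbred--Sund argument, adapted to the Novikov setting, since the statement is the standard ``central extension recovers the quotient'' lemma. Write $W = \operatorname{Ann}({\bf A})$, choose a complementary subspace ${\bf A}'$ to $W$ inside ${\bf A}$ as a vector space, so that ${\bf A} = {\bf A}' \oplus W$ with $\dim {\bf A}' = n-m$ and $\dim W = m$. First I would transport the product: for $x,y \in {\bf A}'$, write $xy = \pi(xy) + \theta(x,y)$ where $\pi \colon {\bf A} \to {\bf A}'$ is the projection along $W$, and $\theta(x,y) \in W$ is the $W$-component. Define the multiplication on ${\bf A}'$ by $x \ast y := \pi(xy)$; then by construction ${\bf A} \cong ({\bf A}', \ast)_{\theta}$ as algebras, via the identity map on the underlying vector space, where ${\mathbb V} := W$. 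One checks immediately that ${\bf A}/W \cong ({\bf A}', \ast)$, since $W = \operatorname{Ann}({\bf A})$ is an ideal and the quotient product is exactly $\pi(xy) = x \ast y$.

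Next I would verify that $({\bf A}', \ast)$ is a Novikov algebra and that $\theta \in {\rm Z}^2({\bf A}', {\mathbb V})$. Both follow from the two Novikov identities holding in ${\bf A}$: projecting the identities $(xy)z = (xz)y$ and $(xy)z - x(yz) = (yx)z - y(xz)$ onto ${\bf A}'$ gives the Novikov identities for $\ast$, and projecting onto $W$ gives precisely the two cocycle conditions $\theta(xy,z) = \theta(xz,y)$ and $\theta(xy,z) - \theta(x,yz) = \theta(yx,z) - \theta(y,xz)$ — here one must be slightly careful because $\theta$ is only defined on ${\bf A}' \times {\bf A}'$ and the products appearing inside, like $xy$, must be replaced by $x \ast y$; but since $\theta$ vanishes whenever either argument lies in $W = \operatorname{Ann}({\bf A})$ (indeed $W \subseteq \operatorname{Ann}(\theta)$), the substitution $\theta(xy,z) = \theta(\pi(xy) + \theta(x,y),\, z) = \theta(x\ast y, z)$ is legitimate, and the cocycle conditions drop out cleanly. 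Then I would check the annihilator condition $\operatorname{Ann}({\bf A}') \cap \operatorname{Ann}(\theta) = 0$: if $x \in {\bf A}'$ lies in both, then $x \ast {\bf A}' = 0$, ${\bf A}' \ast x = 0$, and $\theta(x, {\bf A}') = \theta({\bf A}', x) = 0$, which together say $x{\bf A} = {\bf A}x = 0$ in ${\bf A}$, i.e. $x \in W \cap {\bf A}' = 0$. This uses the displayed formula $\operatorname{Ann}({\bf A}_\theta) = (\operatorname{Ann}(\theta) \cap \operatorname{Ann}({\bf A}')) \oplus {\mathbb V}$ together with $\dim \operatorname{Ann}({\bf A}_\theta) = \dim \operatorname{Ann}({\bf A}) = m = \dim {\mathbb V}$.

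Finally I would establish uniqueness up to isomorphism. If ${\bf A} \cong {{\bf A}''}_{\psi}$ is another such presentation, then the quotient by the annihilator forces ${\bf A}'' \cong {\bf A}/\operatorname{Ann}({\bf A}) \cong {\bf A}'$, because in any central extension ${\bf C}_\sigma$ with $\operatorname{Ann}({\bf C}) \cap \operatorname{Ann}(\sigma) = 0$ one has $\operatorname{Ann}({\bf C}_\sigma) = {\mathbb V}$ exactly, so ${\bf C}_\sigma / \operatorname{Ann}({\bf C}_\sigma) \cong {\bf C}$. Hence both ${\bf A}'$ and ${\bf A}''$ are isomorphic to the intrinsically defined algebra ${\bf A}/\operatorname{Ann}({\bf A})$, giving uniqueness of the base algebra; the cocycle $\theta$ itself is of course only determined up to the $\operatorname{Aut}({\bf A}')$-action and coboundaries, which is the subject of the classification method rather than this lemma, so I would not claim uniqueness of $\theta$. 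The main obstacle, such as it is, is purely bookkeeping: making sure the cocycle identities are read off correctly from the Novikov identities after the $\pi$/$\theta$ decomposition, and in particular tracking which products get replaced by $\ast$-products versus genuine ${\bf A}$-products when one argument is a cocycle value — but because $W$ annihilates everything this never causes trouble. No deep idea is needed beyond the standard extension-theoretic dictionary.
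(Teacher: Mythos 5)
Your proposal is correct and follows essentially the same route as the paper: both choose a linear complement ${\bf A}'$ of $\operatorname{Ann}({\bf A})$, use the projection (your $\pi$, the paper's $P$) to define the quotient product and set $\theta(x,y)=xy-[x,y]_{{\bf A}'}$, and then deduce ${\bf A}\cong{\bf A}'_\theta$, ${\bf A}/\operatorname{Ann}({\bf A})\cong{\bf A}'$ and the annihilator condition. You merely spell out in more detail the cocycle verification and the condition $\operatorname{Ann}({\bf A}')\cap\operatorname{Ann}(\theta)=0$, which the paper leaves implicit.
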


\begin{proof}
Let ${\bf A}'$ be a linear complement of $\operatorname{Ann}({\bf A})$ in ${\bf A}$. Define a linear map $P \colon {\bf A} \longrightarrow {\bf A}'$ by $P(x+v)=x$ for $x\in {\bf A}'$ and $v\in\operatorname{Ann}({\bf A})$, and define a multiplication on ${\bf A}'$ by $[x, y]_{{\bf A}'}=P(x y)$ for $x, y \in {\bf A}'$.
For $x, y \in {\bf A}$, we have
\[P(xy)=P((x-P(x)+P(x))(y- P(y)+P(y)))=P(P(x) P(y))=[P(x), P(y)]_{{\bf A}'}. \]

Since $P$ is a homomorphism $P({\bf A})={\bf A}'$ is a Novikov algebra and
 ${\bf A}/\operatorname{Ann}({\bf A})\cong {\bf A}'$, which gives us the uniqueness. Now, define the map $\theta \colon {\bf A}' \times {\bf A}' \longrightarrow\operatorname{Ann}({\bf A})$ by $\theta(x,y)=xy- [x,y]_{{\bf A}'}$.
  Thus, ${\bf A}'_{\theta}$ is ${\bf A}$ and therefore $\theta \in {\rm Z^2}({\bf A}, {\mathbb V})$ and $\operatorname{Ann}({\bf A})\cap\operatorname{Ann}(\theta)=0$.
\end{proof}

\

\;
\begin{definition}
Let ${\bf A}$ be an algebra and $I$ be a subspace of $\operatorname{Ann}({\bf A})$. If ${\bf A}={\bf A}_0 \oplus I$
then $I$ is called an {\it annihilator component} of ${\bf A}$.
\end{definition}
\begin{definition}
A central extension of an algebra $\bf A$ without annihilator component is called a {\it non-split central extension}.
\end{definition}

Our task is to find all central extensions of an algebra $\bf A$ by
a space ${\mathbb V}$.  In order to solve the isomorphism problem we need to study the
action of $\operatorname{Aut}({\bf A})$ on ${\rm H^{2}}\left( {\bf A},{\mathbb V}
\right) $. To do that, let us fix a basis $e_{1},\ldots ,e_{s}$ of ${\mathbb V}$, and $
\theta \in {\rm Z^{2}}\left( {\bf A},{\mathbb V}\right) $. Then $\theta $ can be uniquely
written as $\theta \left( x,y\right) =
\displaystyle \sum_{i=1}^{s} \theta _{i}\left( x,y\right) e_{i}$, where $\theta _{i}\in
{\rm Z^{2}}\left( {\bf A},\mathbb C\right) $. Moreover, $\operatorname{Ann}(\theta)=\operatorname{Ann}(\theta _{1})\cap\operatorname{Ann}(\theta _{2})\cap\cdots \cap\operatorname{Ann}(\theta _{s})$. Furthermore, $\theta \in
{\rm B^{2}}\left( {\bf A},{\mathbb V}\right) $\ if and only if all $\theta _{i}\in {\rm B^{2}}\left( {\bf A},
\mathbb C\right) $.
It is not difficult to prove (see \cite[Lemma 13]{hac16}) that given a Novikov algebra ${\bf A}_{\theta}$, if we write as
above $\theta \left( x,y\right) = \displaystyle \sum_{i=1}^{s} \theta_{i}\left( x,y\right) e_{i}\in {\rm Z^{2}}\left( {\bf A},{\mathbb V}\right) $ and 
$\operatorname{Ann}(\theta)\cap \operatorname{Ann}\left( {\bf A}\right) =0$, then ${\bf A}_{\theta }$ has an
annihilator component if and only if $\left[ \theta _{1}\right] ,\left[
\theta _{2}\right] ,\ldots ,\left[ \theta _{s}\right] $ are linearly
dependent in ${\rm H^{2}}\left( {\bf A},\mathbb C\right) $.

\;

Let ${\mathbb V}$ be a finite-dimensional vector space over $\mathbb C$. The {\it Grassmannian} $G_{k}\left( {\mathbb V}\right) $ is the set of all $k$-dimensional
linear subspaces of $ {\mathbb V}$. Let $G_{s}\left( {\rm H^{2}}\left( {\bf A},\mathbb C\right) \right) $ be the Grassmannian of subspaces of dimension $s$ in
${\rm H^{2}}\left( {\bf A},\mathbb C\right) $. There is a natural action of $\operatorname{Aut}({\bf A})$ on $G_{s}\left( {\rm H^{2}}\left( {\bf A},\mathbb C\right) \right) $.
Let $\phi \in \operatorname{Aut}({\bf A})$. For $W=\left\langle
\left[ \theta _{1}\right] ,\left[ \theta _{2}\right] ,\dots,\left[ \theta _{s}
\right] \right\rangle \in G_{s}\left( {\rm H^{2}}\left( {\bf A},\mathbb C
\right) \right) $ define $\phi W=\left\langle \left[ \phi \theta _{1}\right]
,\left[ \phi \theta _{2}\right] ,\dots,\left[ \phi \theta _{s}\right]
\right\rangle $. We denote the orbit of $W\in G_{s}\left(
{\rm H^{2}}\left( {\bf A},\mathbb C\right) \right) $ under the action of $\operatorname{Aut}({\bf A})$ by $\operatorname{Orb}(W)$. Given
\[
W_{1}=\left\langle \left[ \theta _{1}\right] ,\left[ \theta _{2}\right] ,\dots,
\left[ \theta _{s}\right] \right\rangle ,W_{2}=\left\langle \left[ \vartheta
_{1}\right] ,\left[ \vartheta _{2}\right] ,\dots,\left[ \vartheta _{s}\right]
\right\rangle \in G_{s}\left( {\rm H^{2}}\left( {\bf A},\mathbb C\right)
\right),
\]
we easily have that if $W_{1}=W_{2}$, then $ \bigcap\limits_{i=1}^{s}\operatorname{Ann}(\theta _{i})\cap \operatorname{Ann}\left( {\bf A}\right) = \bigcap\limits_{i=1}^{s}
\operatorname{Ann}(\vartheta _{i})\cap\operatorname{Ann}( {\bf A}) $, and therefore we can introduce
the set
\[
{\bf T}_{s}({\bf A}) =\left\{ W=\left\langle \left[ \theta _{1}\right] ,
\left[ \theta _{2}\right] ,\dots,\left[ \theta _{s}\right] \right\rangle \in
G_{s}\left( {\rm H^{2}}\left( {\bf A},\mathbb C\right) \right) : \bigcap\limits_{i=1}^{s}\operatorname{Ann}(\theta _{i})\cap\operatorname{Ann}({\bf A}) =0\right\},
\]
which is stable under the action of $\operatorname{Aut}({\bf A})$.

\

Now, let ${\mathbb V}$ be an $s$-dimensional linear space and let us denote by
${\bf E}\left( {\bf A},{\mathbb V}\right) $ the set of all {\it non-split $s$-dimensional central extensions} of ${\bf A}$ by
${\mathbb V}$. By above, we can write
\[
{\bf E}\left( {\bf A},{\mathbb V}\right) =\left\{ {\bf A}_{\theta }:\theta \left( x,y\right) = \sum_{i=1}^{s}\theta _{i}\left( x,y\right) e_{i} \ \ \text{and} \ \ \left\langle \left[ \theta _{1}\right] ,\left[ \theta _{2}\right] ,\dots,
\left[ \theta _{s}\right] \right\rangle \in {\bf T}_{s}({\bf A}) \right\} .
\]
We also have the following result, which can be proved as in \cite[Lemma 17]{hac16}.

\begin{lemma}
 Let ${\bf A}_{\theta },{\bf A}_{\vartheta }\in {\bf E}\left( {\bf A},{\mathbb V}\right) $. Suppose that $\theta \left( x,y\right) =  \displaystyle \sum_{i=1}^{s}
\theta _{i}\left( x,y\right) e_{i}$ and $\vartheta \left( x,y\right) =
\displaystyle \sum_{i=1}^{s} \vartheta _{i}\left( x,y\right) e_{i}$.
Then the Novikov algebras ${\bf A}_{\theta }$ and ${\bf A}_{\vartheta } $ are isomorphic
if and only if
$$\operatorname{Orb}\left\langle \left[ \theta _{1}\right] ,
\left[ \theta _{2}\right] ,\dots,\left[ \theta _{s}\right] \right\rangle =
\operatorname{Orb}\left\langle \left[ \vartheta _{1}\right] ,\left[ \vartheta
_{2}\right] ,\dots,\left[ \vartheta _{s}\right] \right\rangle .$$
\end{lemma}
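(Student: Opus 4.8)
The plan is to follow, essentially verbatim, the argument of \cite[Lemma 17]{hac16}, since the only structural facts about the variety that enter are the ones already recorded above: the description of ${\rm Z^2}({\bf A},{\mathbb V})$, ${\rm B^2}({\bf A},{\mathbb V})$ and ${\rm H^2}({\bf A},{\mathbb V})$, the identity $\operatorname{Ann}({\bf A}_{\theta})=(\operatorname{Ann}(\theta)\cap\operatorname{Ann}({\bf A}))\oplus{\mathbb V}$, and the fact that ${\bf A}_{\theta}$ is a Novikov algebra precisely when $\theta\in{\rm Z^2}({\bf A},{\mathbb V})$. The starting observation is that for any ${\bf A}_{\theta}\in{\bf E}({\bf A},{\mathbb V})$ the condition $\langle[\theta_1],\dots,[\theta_s]\rangle\in{\bf T}_s({\bf A})$ forces $\operatorname{Ann}(\theta)\cap\operatorname{Ann}({\bf A})=0$, whence $\operatorname{Ann}({\bf A}_{\theta})={\mathbb V}$, and likewise $\operatorname{Ann}({\bf A}_{\vartheta})={\mathbb V}$. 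Thus ${\mathbb V}$ is intrinsic to each of these algebras, so any isomorphism between them carries ${\mathbb V}$ onto ${\mathbb V}$ and descends to an isomorphism of the quotients ${\bf A}_{\theta}/{\mathbb V}\cong{\bf A}\cong{\bf A}_{\vartheta}/{\mathbb V}$.

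For the forward implication, suppose $\Psi\colon{\bf A}_{\theta}\to{\bf A}_{\vartheta}$ is an isomorphism. By the previous paragraph $\Psi({\mathbb V})={\mathbb V}$, so writing both algebras as ${\bf A}\oplus{\mathbb V}$ as vector spaces we may put $\Psi(a+v)=\phi(a)+\psi(a)+\chi(v)$ for $a\in{\bf A}$, $v\in{\mathbb V}$, where $\phi\colon{\bf A}\to{\bf A}$ is the induced map on the quotient, $\psi\colon{\bf A}\to{\mathbb V}$ is linear, and $\chi\colon{\mathbb V}\to{\mathbb V}$ is invertible. Comparing the ${\bf A}$-components of $\Psi([a+v,b+w]_{{\bf A}_{\theta}})=[\Psi(a+v),\Psi(b+w)]_{{\bf A}_{\vartheta}}$ shows $\phi\in\operatorname{Aut}({\bf A})$, and comparing the ${\mathbb V}$-components gives $\chi(\theta(a,b))=\vartheta(\phi(a),\phi(b))-\psi(ab)$ for all $a,b\in{\bf A}$, that is $\chi\circ\theta=\phi\vartheta-\delta\psi$. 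Expressing $\chi$ as a matrix in the fixed basis $e_1,\dots,e_s$ of ${\mathbb V}$ and reading this identity coordinatewise yields, after passing to ${\rm H^2}({\bf A},{\mathbb C})$, relations $\sum_i \chi_{ji}[\theta_i]=\phi[\vartheta_j]$ with $(\chi_{ji})$ invertible; hence $\langle[\theta_1],\dots,[\theta_s]\rangle=\phi\langle[\vartheta_1],\dots,[\vartheta_s]\rangle$ and the two orbits coincide.

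For the converse, assume the orbits are equal, so there is $\phi\in\operatorname{Aut}({\bf A})$ with $\phi\langle[\vartheta_1],\dots,[\vartheta_s]\rangle=\langle[\theta_1],\dots,[\theta_s]\rangle$. Unwinding this equality of $s$-dimensional subspaces of ${\rm H^2}({\bf A},{\mathbb C})$ produces an invertible $s\times s$ matrix and linear maps $\psi_i\colon{\bf A}\to{\mathbb C}$ realizing $\phi\vartheta_j$ as the corresponding combination of the $\theta_i$ plus $\delta\psi_j$; assembling these into maps $\chi\colon{\mathbb V}\to{\mathbb V}$ and $\psi\colon{\bf A}\to{\mathbb V}$ and running the computation of the previous paragraph in reverse shows that $\Psi(a+v):=\phi(a)+\psi(a)+\chi(v)$ is a well-defined bijective algebra homomorphism ${\bf A}_{\theta}\to{\bf A}_{\vartheta}$, bijectivity being clear since $\phi$ and $\chi$ are invertible.

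The only genuinely delicate point is the first paragraph: one must be certain that an abstract isomorphism of the two extensions cannot mix ${\mathbb V}$ with the rest of the algebra, and this is exactly where the hypothesis ${\bf A}_{\theta},{\bf A}_{\vartheta}\in{\bf E}({\bf A},{\mathbb V})$ (equivalently $\operatorname{Ann}(\theta)\cap\operatorname{Ann}({\bf A})=0$) is used, via $\operatorname{Ann}({\bf A}_{\theta})={\mathbb V}$. Everything else is linear bookkeeping: matching the matrix of $\chi$ with the transition matrix between the two spanning families of cohomology classes and tracking the coboundary corrections $\delta\psi_i$. None of this is specific to Novikov algebras, which is why the proof can be quoted from \cite[Lemma 17]{hac16} with only cosmetic changes.
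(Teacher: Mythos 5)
Your proof is correct and is essentially the argument the paper intends: the paper does not write the proof out but states that the lemma "can be proved as in \cite[Lemma 17]{hac16}", and your write-up is precisely that Skjelbred--Sund argument, including the key point that $\operatorname{Ann}({\bf A}_{\theta})={\mathbb V}$ (from $\operatorname{Ann}(\theta)\cap\operatorname{Ann}({\bf A})=0$) forces any isomorphism to preserve ${\mathbb V}$, followed by the standard bookkeeping $\chi\circ\theta=\phi\vartheta-\delta\psi$ in both directions.
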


This shows that there exists a one-to-one correspondence between the set of $\operatorname{Aut}({\bf A})$-orbits on ${\bf T}_{s}\left( {\bf A}\right) $ and the set of
isomorphism classes of ${\bf E}\left( {\bf A},{\mathbb V}\right) $. Consequently we have a
procedure that allows us, given a Novikov algebra ${\bf A}'$ of
dimension $n-s$, to construct all non-split central extensions of ${\bf A}'$. This procedure is:

\; \;

{\centerline {\textsl{Procedure}}}

\begin{enumerate}
\item For a given Novikov algebra ${\bf A}'$ of dimension $n-s $, determine ${\rm H^{2}}( {\bf A}',\mathbb {C}) $, $\operatorname{Ann}({\bf A}')$ and $\operatorname{Aut}({\bf A}')$.

\item Determine the set of $\operatorname{Aut}({\bf A}')$-orbits on $T_{s}({\bf A}') $.

\item For each orbit, construct the Novikov algebra associated with a
representative of it.
\end{enumerate}

\subsection{Notations}

Let ${\bf A}$ be a Novikov algebra with
a basis $e_{1},e_{2},\dots,e_{n}$. Then by $\Delta _{ij}$\ we will denote the bilinear form
$\Delta _{ij} \colon {\bf A}\times {\bf A}\longrightarrow \mathbb C$
with $\Delta _{ij}\left( e_{l},e_{m}\right) = \delta_{il}\delta_{jm}$.
Then the set $\left\{ \Delta_{ij}:1\leq i, j\leq n\right\} $ is a basis for the linear space of
the bilinear forms on ${\bf A}$. Then every bilinear form $\theta$ can be uniquely written as $
\theta = \displaystyle \sum_{1\leq i,j\leq n} c_{ij}\Delta _{{i}{j}}$, where $
c_{ij}\in \mathbb C$.
Let us fix the following notations:
$$\begin{array}{lll}
{\mathcal N}^i_j& \mbox{---}& j\mbox{th }i\mbox{-dimensional one-generated Novikov algebra.} \\
\end{array}$$

\subsection{Low dimensional one-generated nilpotent Novikov algebras.}
Thanks to \cite{kkk18} we have the algebraic classification of $2$-, $3$- and $4$- dimensional 
and $5$-dimensional with $2$-dimensional annihilator one-generated nilpotent algebras.
Let us give a list if $1$-, $2$-, $3$- and $4$-dimensional one-generated nilpotent Novikov algebras.
\begin{longtable}{llllllllll}
${\mathcal N}^2_{01}$ &$:$ & $e_1 e_1 = e_2$\\
${\mathcal N}^3_{01}$ &$:$ & $e_1 e_1 = e_2$ &    $e_2 e_1=e_3$ \\
${\mathcal N}^3_{02}(\lambda)$  &$:$ & $e_1 e_1 = e_2$ &   $e_1 e_2=e_3$ &   $e_2 e_1=\lambda e_3$\\
${\mathcal N}^4_{01}$& $:$ &$e_1 e_1 = e_2$ &  $ e_1 e_2=e_3$ &  $e_2 e_1=e_4$\\
${\mathcal N}^4_{02}$ &$:$ & $e_1 e_1 = e_2$ &    $e_2 e_1=e_3$ &    $e_1 e_3=e_4$ &   $e_3 e_1=-e_4$\\
${\mathcal N}^4_{03}(\lambda)$ &$:$ & $e_1 e_1 = e_2$ &    $e_1 e_2=e_3$ &   $e_1 e_3=(2-\lambda)e_4$ & $e_2 e_1= \lambda e_3$ &  $ e_2 e_2=\lambda e_4$ &   $e_3 e_1=\lambda e_4$\\
${\mathcal N}^4_{04}$ &$:$&
$e_1 e_1 = e_2$ &    $e_1 e_2=e_3$ &    $e_1 e_3=2e_4$ &  $e_2 e_1= e_4$\\
${\mathcal N}^4_{05}$ &$:$ &
$e_1 e_1 = e_2$ &   $e_1 e_2=e_3$ &  $e_1 e_3=e_4$ & $ e_2 e_1= e_3 + e_4$ &   $e_2 e_2= e_4$ &   $e_3 e_1= e_4$
\end{longtable}

\begin{remark}
Note that a non-split central extension of a split algebra can not be a one-generated algebra.
Hence, we will consider central extensions only of non-split one-generated nilpotent algebras.
\end{remark}

\section{Classification of 5-dimensional one-generated nilpotent Novikov algebras}
All the necessary information about
coboundaries, cocycles and  second cohomology spaces of $5$-dimensional one-generated nilpotent Novikov algebras is given in Table A (see Table A from Appendix).

From the results of \cite{kkk18}, it is not difficult to see that there are two $5$-dimensional one-generated Novikov algebras with $2$-dimensional annihilator:
\begin{longtable}{llllllll}
${\mathcal N}^5_{01}$ &:& $e_1 e_1 = e_2$  & $e_1e_2=e_4$ & $e_1e_3=e_5$ & $e_2 e_1=e_3$ & $e_3e_1=-e_5$&   \\
${\mathcal N}^5_{02}(\lambda)$ &:& $e_1 e_1 = e_2 $ & $e_1 e_2=e_3$ & $e_1e_3=(2-\lambda)e_5$ &  $e_2 e_1=\lambda e_3+e_4$&  $e_2e_2=\lambda e_5$ & $e_3e_1=\lambda e_5$
\end{longtable}

\subsection{Central extensions of ${\mathcal N}^4_{01}$}

Let us use the following notations
\[\nabla_1=[\Delta_{13}], \quad  \nabla_2=[\Delta_{14}] - [\Delta_{41}], \quad \nabla_3 = [\Delta_{22}]+[\Delta_{31}]+2[\Delta_{41}].\]

The automorphism group of ${\mathcal N}^4_{01}$ consists of invertible matrices of the form
\[\phi=\begin{pmatrix}
x & 0 & 0 & 0 \\
               y & x^2 & 0 & 0 \\
               z & xy & x^3 & 0 \\
               v & xy & 0 & x^3 \\
\end{pmatrix}. \]

Since
\[ \phi^T\begin{pmatrix}
0 & 0 & \alpha_1 & \alpha_2 \\
               0 & \alpha_3 & 0 & 0 \\
               \alpha_3 & 0 & 0 & 0 \\
               2\alpha_3-\alpha_2 & 0 & 0 & 0
\end{pmatrix}\phi =
\begin{pmatrix}
 \alpha^* & \alpha^{**} & \alpha_1^* & \alpha_2^* \\
               \alpha^{***} & \alpha_3^* & 0 & 0 \\
               \alpha_3^* & 0 & 0 & 0 \\
               2\alpha_3^*-\alpha_2^* & 0 & 0 & 0
\end{pmatrix},
\]
 we have that the action of ${\rm Aut} ({\mathcal N}^4_{01})$ on the subspace
$\langle \sum\limits_{i=1}^3\alpha_i\nabla_i  \rangle$
is given by
$\langle \sum\limits_{i=1}^3\alpha_i^{*}\nabla_i\rangle,$
where
\[\alpha_1^{*}=x^4\alpha_1, \quad \alpha_2^{*}=x^4\alpha_2, \quad \alpha_3^{*}=x^4 \alpha_3.\]

\subsubsection{1-dimensional central extensions of ${\mathcal N}^4_{01}$}
Since we are interested only in new algebras, we have the following cases:
\begin{enumerate}
\item if $\alpha_3\neq 0,$ then we have the representative $\langle \lambda \nabla_1 +\mu \nabla_2 +  \nabla_3 \rangle;$
\item if $\alpha_3=0,$ then $(\alpha_1,\alpha_2)\neq (0, 0)$ and we have the representative
$\langle \nabla_1+\lambda\nabla_2\rangle$ with $\lambda \neq 0.$
\end{enumerate}

Hence, we have the following new 5-dimensional algebras:
\begin{longtable}{lllllllllll}
${\mathcal N}^5_{03}(\lambda, \mu)$ & :&
$e_1 e_1 = e_2$ & $e_1 e_2=e_3$&  $e_1e_3=\lambda e_5$& $e_1 e_4= \mu e_5$& &&&\\
&& $e_2 e_1=e_4$ & $e_2 e_2= e_5$& $e_3e_1= e_5$ & $e_4 e_1= (2- \mu) e_5$  \\
${\mathcal N}^5_{04}(\lambda)_{\lambda\neq0}$ & :&
$e_1 e_1 = e_2$ & $e_1 e_2=e_3$&  $e_1e_3= e_5$& $e_1 e_4= \lambda e_5$ &   $e_2 e_1=e_4$ & $e_4e_1=-\lambda e_5$ & & 
\end{longtable}

\subsubsection{2-dimensional central extensions of ${\mathcal N}^4_{01}$}

Consider the vector space generated by the following two cocycles
$$\begin{array}{rcl}
\theta_1 &=& \alpha_1 \nabla_1+\alpha_2\nabla_2+\alpha_3\nabla_3,  \\
\theta_2 &=& \beta_1 \nabla_1+\beta_2\nabla_2.
\end{array}$$

If $\alpha_3=0,$ then we have the representative $\langle \nabla_1, \nabla_2 \rangle.$
If $\alpha_3\neq 0,$ then

\begin{enumerate}
    \item $\beta_2=0,$ then we have the family of representatives
    $\langle \nabla_1, \lambda \nabla_2+\nabla_3 \rangle;$

    \item $\beta_2\neq 0,$  then we have the family of   representatives
    $\langle \lambda \nabla_1+\nabla_2, \mu \nabla_1 + \nabla_3 \rangle.$
\end{enumerate}

Hence, we have the following 6-dimensional algebras:
\begin{longtable}{lllllllll}
 ${\mathcal N}^6_{01}$ & : &
$e_1 e_1 = e_2$ &    $e_1e_2=e_3$ & $e_1 e_3=e_5$ & $e_1e_4=e_6$ & $e_2 e_1=e_4$ & $e_4 e_1=-e_6$ \\

 ${\mathcal N}^6_{02}(\lambda)$ & : &
$e_1 e_1 = e_2$&   $e_1 e_2=e_3$ &    $e_1 e_3=e_5$ & $e_1e_4=\lambda e_6$ &  && \\
&& $e_2 e_1= e_4$ & $e_2e_2=  e_6$ & $e_3e_1= e_6$  & $e_4e_1= (2-\lambda)e_6$ &&& \\

 ${\mathcal N}^6_{03}(\lambda, \mu)$ & : &
$e_1 e_1 = e_2$ &  $e_1 e_2=e_3$ &    $e_1 e_3=\lambda e_5 + \mu e_6$ & $e_1e_4= e_5$ &  && \\
&& $e_2 e_1= e_4$ & $e_2e_2=  e_6$ & $e_3e_1= e_6$  & $e_4e_1= -e_5+2e_6$ & &&
\end{longtable}

\subsection{Central extensions of ${\mathcal N}^4_{02}$} Let us use the following notations
\[\nabla_1=[\Delta_{12}], \quad \nabla_2=2[\Delta_{14}]-[\Delta_{23}]-[\Delta_{41}].\]

The automorphism group of ${\mathcal N}^4_{02}$ consists of invertible matrices of the form
\[\phi=\begin{pmatrix}
x & 0 & 0  & 0\\
0 & x^2 & 0 & 0\\
y & 0 & x^3 & 0 \\
z & 0 & 0 &  x^4\\
\end{pmatrix}. \]

Since
\[ \phi^T\begin{pmatrix}
0 & \alpha_1 & 0 &  2\alpha_2\\
0 & 0 &  -\alpha_2& 0  \\
0 & 0 & 0 & 0 \\
 -\alpha_2 & 0 & 0 & 0
\end{pmatrix}\phi =
\begin{pmatrix}
\alpha^* & \alpha^*_1 & 0& 2\alpha_2^* \\
\alpha^{**}&0 &-\alpha_2^*  & 0 \\
0 & 0 & 0 & 0 \\
-\alpha_2^*  & 0 &0&0
\end{pmatrix},
\]
 we have that the action of ${\rm Aut} ({\mathcal N}^4_{02})$ on the subspace
$\langle \sum\limits_{i=1}^2\alpha_i\nabla_i \rangle$
is given by
$\langle \sum\limits_{i=1}^2 \alpha_i^* \nabla_i \rangle,$
where
\[\alpha_1^*=x^3\alpha_1, \quad \alpha_2^* =x^5\alpha_2. \]

\subsubsection{1-dimensional central extensions of ${\mathcal N}^4_{02}$}
Since we are interested only in new algebras, we have assume $\alpha_2 \neq0,$ and we have the following cases:
\begin{enumerate}
\item if $\alpha_1=0,$ then we have the representative $\langle \nabla_2\rangle;$

\item if $\alpha_1 \neq 0,$ then by choosing
$x=\sqrt{ \alpha_1 / \alpha_2},$   we have the representative $\langle\nabla_1+\nabla_2\rangle.$
\end{enumerate}

Thus, we have the following $5$-dimensional algebras:
\begin{longtable}{lllllllllll}
 ${\mathcal N}^5_{05}$ &:&
$e_1 e_1 = e_2$ &    $e_1 e_3=e_4$ & $e_1e_4=2e_5$ & $e_2 e_1=e_3$ &   $e_2e_3=-e_5$ &  $e_3 e_1=-e_4$  & $e_4e_1=-e_5$ & \\
 ${\mathcal N}^5_{06}$ & : &
$e_1 e_1 = e_2$ &    $e_1e_2=e_5$ & $e_1 e_3=e_4$ & $e_1e_4=2e_5$ &  $e_2 e_1=e_3$ & $e_2e_3=-e_5$ &  $e_3 e_1=-e_4$  & $e_4e_1=-e_5$ 
\end{longtable}

\subsubsection{2-dimensional central extensions of ${\mathcal N}^4_{02}$}
Since $\dim\left({\rm H^2}( {\mathcal N}^4_{02})\right)=2,$ there is only one 2-dimensional central extension of  ${\mathcal N}^4_{02}.$
Thus, we have the following $6$-dimensional algebra:
\begin{longtable}{lllllllllll}
 ${\mathcal N}^6_{04}$ & : &
$e_1 e_1 = e_2$ &    $e_1e_2=e_5$ & $e_1 e_3=e_4$ & $e_1e_4=2e_6$& $e_2 e_1=e_3$ & $e_2e_3=-e_6$  & $e_3 e_1=-e_4$  & $e_4e_1=-e_6$ 
\end{longtable}

\subsection{Central extensions of ${\mathcal N}^4_{03}(\lambda)_{\lambda\notin\{0, 1\}}$} Let us use the following notations
\[\nabla_1=[\Delta_{21}], \quad \nabla_2=(3-2\lambda)[\Delta_{14}]+\lambda(2-\lambda)[\Delta_{23}]+\lambda[\Delta_{32}]+\lambda[\Delta_{41}].\]

The automorphism group of ${\mathcal N}^4_{03}(\lambda)$ consists of invertible matrices of the form
\[\phi=\begin{pmatrix}
x & 0 & 0  & 0\\
0 & x^2 & 0 & 0\\
y & 0 & x^3 & 0 \\
z & 2xy & 0 &  x^4\\
\end{pmatrix}. \]
Since

\[ \phi^T\begin{pmatrix}
0 & 0 & 0 &  (3-2\lambda)\alpha_2\\
\alpha_1 & 0 & \lambda( 2-\lambda)\alpha_2& 0  \\
0 & \lambda\alpha_2 & 0 & 0 \\
 \lambda\alpha_2 & 0 & 0 & 0
\end{pmatrix}\phi =
\begin{pmatrix}
\alpha^{**} & \alpha^* & 0& (3-2\lambda)\alpha_2^* \\
\alpha_1^*+\lambda\alpha^* &0 &\lambda(2-\lambda)\alpha_2^*  & 0 \\
0 & \lambda\alpha_2^* & 0 & 0 \\
\lambda\alpha_2^*& 0 &0&0
\end{pmatrix},
\]
 we have that the action of ${\rm Aut} ({\mathcal N}^4_{03}(\lambda))$ on the subspace
$\langle \sum\limits_{i=1}^2 \alpha_i\nabla_i  \rangle$
is given by
$\langle \sum\limits_{i=1}^2 \alpha_i^* \nabla_i  \rangle,$
where
\[\alpha_1^* =x^2(x\alpha_1+2\lambda(\lambda-1)y\alpha_2), \quad \alpha_2^* =x^5\alpha_2. \]

\subsubsection{1-dimensional central extensions of ${\mathcal N}^4_{03}(\lambda)_{\lambda\notin\{0, 1\}}$}
Since we are interested only in new algebras, we have $\alpha_2 \neq0$.
Then choosing
$y=\frac{x\alpha_1}{2\lambda(1-\lambda)\alpha_2},$  we have the representative $\langle\nabla_2\rangle.$
Hence, we have the following $5$-dimensional algebras:
\begin{longtable}{lllllll}
 ${\mathcal N}^5_{07}(\lambda)_{\lambda\notin\{0, 1\}}$ & : &
$e_1 e_1 = e_2$&   $e_1 e_2=e_3$&    $e_1 e_3=(2-\lambda)e_4$ & $e_1e_4=(3-2\lambda) e_5$ & $e_2 e_1= \lambda e_3$\\
&& $e_2e_2= \lambda e_4$ & $e_2e_3=\lambda (2-\lambda )e_5$ & $e_3e_1=\lambda e_4$ & $e_3e_2=\lambda e_5$ &  $e_4e_1=\lambda e_5$
\end{longtable}

\subsubsection{2-dimensional central extensions of ${\mathcal N}^4_{03}(\lambda)_{\lambda\notin\{0, 1\}}$}
Since $\dim\left({\rm H^2}( {\mathcal N}^4_{03}(\lambda)_{\lambda\neq0; 1})\right)=2,$ there is only one 2-dimensional central extension of  ${\mathcal N}^4_{03}(\lambda)_{\lambda\notin\{0, 1\}}.$
Hence, we have the following $6$-dimensional algebras:
\begin{longtable}{lllllll}
 ${\mathcal N}^6_{05}(\lambda)_{\lambda\notin\{0, 1\}}$ & : & $e_1 e_1 = e_2$ &
$e_1 e_2=e_3$ &    $e_1 e_3=(2-\lambda)e_4$ & $e_1e_4=(3-2\lambda)
e_5$ &
$e_2 e_1= \lambda e_3+e_6$  \\
&&$e_2e_2= \lambda e_4$ & $e_2e_3=\lambda (2-\lambda )e_5$ & $e_3e_1=\lambda e_4$ & $e_3e_2=\lambda e_5$ &  $e_4e_1=\lambda e_5$
\end{longtable}

\subsection{Central extensions of ${\mathcal N}^4_{03}(0)$.} Let us use the following notations
\[\nabla_1=[\Delta_{14}], \quad \nabla_2=[\Delta_{21}], \quad \nabla_3=2[\Delta_{23}]-2[\Delta_{32}]+[\Delta_{41}].\]

The automorphism group of ${\mathcal N}^4_{03}(0)$ consists of invertible matrices of the form
\[\phi=\begin{pmatrix}
x & 0 & 0  & 0\\
y & x^2 & 0 & 0\\
z & xy & x^3 & 0 \\
t & 2xz & 2x^2y &  x^4\\
\end{pmatrix}. \]

Since
\[ \phi^T\begin{pmatrix}
0 & 0 & 0 &  \alpha_1\\
\alpha_2 & 0 &  2\alpha_3& 0  \\
0 & -2\alpha_3 & 0 & 0 \\
 \alpha_3 & 0 & 0 & 0
\end{pmatrix}\phi =
\begin{pmatrix}
\alpha^* & \alpha^{**} & \alpha^{***}& \alpha_1^* \\
\alpha_2^* &0 &2\alpha_3^*  & 0 \\
0 & -2 \alpha_3^* & 0 & 0 \\
\alpha_3^*& 0 &0&0
\end{pmatrix},
\]
 we have that the action of ${\rm Aut} ({\mathcal N}^4_{03}(0))$ on the subspace
$\langle \sum\limits_{i=1}^3 \alpha_i\nabla_i  \rangle$
is given by
$\langle \sum\limits_{i=1}^3 \alpha_i^* \nabla_i  \rangle,$
where
\[\alpha_1^* =x^5\alpha_1, \quad \alpha_2^* =x^3\alpha_2+(4x^2z-2xy^2)\alpha_3, \quad \alpha_3^* =x^5\alpha_3. \]

\subsubsection{1-dimensional central extensions of ${\mathcal N}^4_{03}(0)$} 
Since we are interested only in new algebras, we have $(\alpha_1, \alpha_3) \neq (0,0)$ and:
\begin{enumerate}
\item if $\alpha_3 = 0, \alpha_2 =0,$ then we have the representative $\langle\nabla_1\rangle;$

\item if $ \alpha_3=0, \alpha_2 \neq0,$ then choosing
$x=\sqrt{ {\alpha_2} /  {\alpha_1}},$   we have the representative $\langle\nabla_1+\nabla_2\rangle;$

\item if $\alpha_3\neq0,$ then choosing $y=0, z=-\frac{\alpha_2}{4\alpha_3\sqrt[5]{\alpha_3}},$ we have the family of representatives $\langle \lambda\nabla_1+\nabla_3\rangle.$
\end{enumerate}

Hence, we have the following 5-dimensional algebras:
\begin{longtable}{lllllllll}
 ${\mathcal N}^5_{07}(0)$ & : &
$e_1 e_1 = e_2$&   $e_1 e_2=e_3$&    $e_1 e_3=2e_4$ & $e_1e_4=3 e_5$ &&& \\
 ${\mathcal N}^5_{08}$ & : &
$e_1 e_1 = e_2$ &  $e_1 e_2=e_3$ & $e_1 e_3=e_4$& $e_1e_4=e_5$& $e_2e_1=e_5$&&\\
 ${\mathcal N}^5_{09}(\lambda)$ & : &
$e_1 e_1 = e_2$&   $e_1 e_2=e_3$ & $e_1 e_3=2e_4$ & $e_1e_4= \lambda e_5$ & $e_2e_3=2e_5$ & $e_3e_2=-2e_5$ & $e_4e_1=e_5$
\end{longtable}

\subsubsection{2-dimensional central extensions of ${\mathcal N}^4_{03}(0)$} 

Consider the vector space generated by the following two cocycles
$$\begin{array}{rcl}
\theta_1 &=& \alpha_1 \nabla_1+\alpha_2\nabla_2+\alpha_3\nabla_3,  \\
\theta_2 &=& \beta_1 \nabla_1+\beta_2\nabla_2.
\end{array}$$

If $\alpha_3=0,$ then we have the representative $\langle \nabla_1, \nabla_2 \rangle.$
If $\alpha_3\neq 0,$ then

\begin{enumerate}
\item if $\beta_1=0, \beta_2\neq 0,$ then we have the family of representatives $\langle \lambda \nabla_1+\nabla_3, \nabla_2 \rangle;$

\item if $\beta_1\neq0, \beta_2\neq 0,$ then choosing $x=\sqrt{ \beta_2 /  \beta_1},  y=0, z=\frac{x^3\alpha_1-x\alpha_2}{4\alpha_3},$ 
we have the  representative $\langle  \nabla_1+\nabla_2, \nabla_3 \rangle;$

\item if $\beta_1\neq0, \beta_2 = 0,$ then we have the representative $\langle  \nabla_1, \nabla_3 \rangle.$

\end{enumerate}

Hence we have the following 6-dimensional algebras:
\begin{longtable}{llllllllll}
 ${\mathcal N}^6_{05}(0)$ & : & $e_1 e_1 = e_2$ &
$e_1 e_2=e_3$ &    $e_1 e_3=2e_4$ & $e_1e_4=3
e_5$ &
$e_2 e_1= e_6$ &&&\\
 ${\mathcal N}^6_{06}(\lambda)$ & : &
$e_1 e_1 = e_2$&  $e_1 e_2=e_3$& $e_1 e_3=2e_4$& $e_1e_4=\lambda e_5$& $e_2e_1=e_6$ & $e_2e_3=2e_5$ & $e_3e_2=-2e_5$ & $e_4e_1=e_5$ \\
 ${\mathcal N}^6_{07}$ &: &
$e_1 e_1 = e_2$ &  $e_1 e_2=e_3$& $e_1 e_3=2e_4$& $e_1e_4= e_5$ & $e_2e_1=e_5$ & $e_2e_3=2e_6$ & $e_3e_2=-2e_6$& $e_4e_1=e_6$ \\

 ${\mathcal N}^6_{08}$ & : &
$e_1 e_1 = e_2$&   $e_1 e_2=e_3$& $e_1 e_3=2e_4$ & $e_1e_4= e_5$ & $e_2e_3=2e_6$ & $e_3e_2=-2e_6$ & $e_4e_1=e_6$

\end{longtable}

\subsection{Central extensions of ${\mathcal N}^4_{03}(1)$.} Let us use the following notations
\[\nabla_1=[\Delta_{21}], \quad \nabla_2=[\Delta_{14}]+[\Delta_{23}]+[\Delta_{32}]+[\Delta_{41}].\]

The automorphism group of ${\mathcal N}^4_{03}(1)$ consists of invertible matrices of the form
\[\phi=\begin{pmatrix}
x & 0 & 0  & 0\\
y & x^2 & 0 & 0\\
z & 2xy & x^3 & 0 \\
t & 2xz+y^2 & 3x^2y &  x^4\\
\end{pmatrix}. \]

Since
\[ \phi^T\begin{pmatrix}
0 & 0 & 0 &  \alpha_2\\
\alpha_1 & 0 & \alpha_2& 0  \\
0 & \alpha_2 & 0 & 0 \\
 \alpha_2 & 0 & 0 & 0
\end{pmatrix}\phi =
\begin{pmatrix}
\alpha^{***} & \alpha^{**} & \alpha^{*}& \alpha_2^* \\
\alpha_1^*+\alpha^{**} &\alpha^{*} &\alpha_2^*  & 0 \\
\alpha^{*} & \alpha_2^* & 0 & 0 \\
\alpha_2^*& 0 &0&0
\end{pmatrix},
\]
 we have that the action of ${\rm Aut} ({\mathcal N}^4_{03}(1))$ on the subspace
$\langle \sum\limits_{i=1}^2 \alpha_i\nabla_i  \rangle$
is given by
$\langle \sum\limits_{i=1}^2 \alpha_i^* \nabla_i  \rangle,$
where
\[\alpha_1^* =x^3\alpha_1, \quad \alpha_2^* =x^5\alpha_2. \]

\subsubsection{1-dimensional central extensions of ${\mathcal N}^4_{03}(1)$} 
We may assume $\alpha_2\neq0,$ so we have the following cases:
\begin{enumerate}
\item if $\alpha_1=0,$ then we have the representative $\langle\nabla_2\rangle;$
\item if $\alpha_1 \neq 0,$ then choosing
$x=\sqrt{{\alpha_1}  /{\alpha_2}}$   we have the representative $\langle\nabla_1+\nabla_2\rangle.$
\end{enumerate}

Hence, we have the following $5$-dimensional algebras:
\begin{longtable}{lllllll}
 ${\mathcal N}^5_{07}(1)$ & : &
$e_1 e_1 = e_2$&   $e_1 e_2=e_3$&    $e_1 e_3=e_4$ & $e_1e_4= e_5$ & $e_2 e_1=  e_3$\\
&& $e_2e_2=  e_4$ & $e_2e_3=e_5$ & $e_3e_1= e_4$ & $e_3e_2= e_5$ &  $e_4e_1= e_5$\\
${\mathcal N}^5_{10}$ & : &
$e_1 e_1 = e_2$&   $e_1 e_2=e_3$&    $e_1 e_3=e_4$ & $e_1e_4= e_5$ & $e_2 e_1= e_3+e_5$\\
&&  $e_2e_2= e_4$ & $e_2e_3=e_5$ & $e_3e_1=e_4$ & $e_3e_2=e_5$ & $e_4e_1=e_5$ 
\end{longtable}

\subsubsection{2-dimensional central extensions of ${\mathcal N}^4_{03}(1)$} 
Since $\dim\left({\rm H^2}( {\mathcal N}^4_{03}(1))\right)=2,$ there is only one 2-dimensional central extension of  ${\mathcal N}^4_{03}(1).$
Hence, we have the following $6$-dimensional algebra:
\begin{longtable}{lllllll}
 ${\mathcal N}^6_{05}(1)$ & : & $e_1 e_1 = e_2$ &
$e_1 e_2=e_3$ &    $e_1 e_3=e_4$ & $e_1e_4=
e_5$ &
$e_2 e_1=  e_3+e_6$  \\
&&$e_2e_2= e_4$ & $e_2e_3=e_5$ & $e_3e_1= e_4$ & $e_3e_2=e_5$ &  $e_4e_1= e_5$ 
\end{longtable}

\subsection{Central extensions of ${\mathcal N}^4_{04}$}Let us use the following notations
\[\nabla_1=[\Delta_{13}], \quad \nabla_2=2[\Delta_{14}]+[\Delta_{22}]+[\Delta_{31}],  \quad \nabla_3=[\Delta_{14}]-2[\Delta_{23}]+2[\Delta_{32}]-[\Delta_{41}].\]

The automorphism group of ${\mathcal N}^4_{04}$ consists of invertible matrices of the form
\[\phi=\begin{pmatrix}
1 & 0 & 0  & 0\\
x & 1 & 0 & 0\\
y & x & 1 & 0 \\
z & x+2y & 2x &  1\\
\end{pmatrix}. \]

Since
\[ \phi^T\begin{pmatrix}
0 &0 &  \alpha_1 &  2\alpha_2+\alpha_3\\
0 & \alpha_2 &  -2\alpha_3& 0  \\
\alpha_2 & 2\alpha_3 & 0 & 0 \\
 -\alpha_3 & 0 & 0 & 0
\end{pmatrix}\phi =
\begin{pmatrix}
\alpha^{***} &\alpha^{**} &  2\alpha^*+\alpha_1^*  &  2\alpha_2^*+\alpha_3^*\\
\alpha^* & \alpha_2^* &  -2\alpha_3^*& 0  \\
\alpha_2^* & 2\alpha_3^* & 0 & 0 \\
 -\alpha_3^* & 0 & 0 & 0
\end{pmatrix},
\]
 we have that the action of ${\rm Aut} ({\mathcal N}^4_{04})$ on the subspace
$\langle \sum\limits_{i=1}^3 \alpha_i\nabla_i  \rangle$
is given by
$\langle \sum\limits_{i=1}^3 \alpha_i^* \nabla_i \rangle,$
where
\[\alpha_1^*=\alpha_1+2(x-2x^2+4y)\alpha_3, \quad \alpha_2^* =\alpha_2, \quad \alpha_3^* =\alpha_3. \]

\subsubsection{1-dimensional central extensions of ${\mathcal N}^4_{04}$}
Since we are interested only in new algebras, we assume  $(\alpha_2, \alpha_3) \neq (0,0)$ and we have the following cases:
\begin{enumerate}
\item if $\alpha_2\neq0, \alpha_3=0,$ then we have the family of representatives $\langle \lambda\nabla_1+\nabla_2\rangle;$

\item if $\alpha_3\neq0,$ then by choosing $ y=-\frac{\alpha_1+(2x-4x^2)\alpha_3}{8\alpha_3},$ we have the family of representatives $\langle\lambda\nabla_2+\nabla_3\rangle.$
\end{enumerate}

Hence, we have  the following 5-dimensional algebras:
\begin{longtable}{lllllllll}
 ${\mathcal N}^5_{11}(\lambda)$ & : &
$e_1 e_1 = e_2$ &   $e_1 e_2=e_3$ &    $e_1 e_3=2e_4 + \lambda e_5$ &  $e_1e_4=2e_5$ & $e_2 e_1= e_4$ &  $e_2e_2=e_5$ & $e_3e_1=e_5$ \\
 ${\mathcal N}^5_{12}(\lambda)$ & : &
$e_1 e_1 = e_2$&   $e_1 e_2=e_3$&    $e_1 e_3=2e_4$ & $e_1e_4=(2\lambda+1) e_5$ & $e_2 e_1= e_4$&&\\
&&  $e_2e_2=\lambda e_5$ & $e_2e_3=-2e_5$ & $e_3e_1=\lambda e_5$ & $e_3e_2=2e_5$ & $e_4e_1=-e_5$&&
\end{longtable}

\subsubsection{2-dimensional central extensions of ${\mathcal N}^4_{04}$}

Consider the vector space generated by the following two cocycles
$$\begin{array}{rcl}
\theta_1 &=& \alpha_1 \nabla_1+\alpha_2\nabla_2+\alpha_3\nabla_3,  \\
\theta_2 &=& \beta_1 \nabla_1+\beta_2\nabla_2.
\end{array}$$

If $\alpha_3=0,$ then we have the representative $\langle \nabla_1, \nabla_2 \rangle.$
If $\alpha_3\neq 0,$ then
\begin{enumerate}
    \item if $\beta_2=0,$ then we have the family of representatives
    $\langle \nabla_1, \lambda \nabla_2+\nabla_3 \rangle;$

    \item if $\beta_2\neq 0,$  then we have the family of representatives
    $\langle \lambda \nabla_1+\nabla_2, \nabla_3 \rangle.$
\end{enumerate}

Hence, we have the following 6-dimensional algebras:
\begin{longtable}{lllllllll}
 ${\mathcal N}^6_{09}$ & : &
$e_1 e_1 = e_2$ &   $e_1 e_2=e_3$ &    $e_1 e_3=2e_4+e_6$ & $e_1e_4=2e_5$ & $e_2 e_1= e_4$ & $e_2e_2=e_5$& $e_3e_1=e_5$  \\

 ${\mathcal N}^6_{10}(\lambda)$ & : &
$e_1 e_1 = e_2$&   $e_1 e_2=e_3$&    $e_1 e_3=2e_4+e_6$ & $e_1e_4=(2\lambda+1) e_5$ & $e_2 e_1= e_4$ &&\\
&& $e_2e_2=\lambda e_5$ & $e_2e_3=-2e_5$  &   $e_3e_1=\lambda e_5$ & $e_3e_2=2e_5$ & $e_4e_1=-e_5$  &&\\

 ${\mathcal N}^6_{11}(\lambda)$ & : &
$e_1 e_1 = e_2$   & $e_1 e_2=e_3$&    $e_1 e_3=2e_4+\lambda e_6$ & $e_1e_4=e_5+ 2e_6$ & $e_2 e_1= e_4$ && \\
&&$e_2e_2=e_6$& $e_2e_3=-2e_5$  & $e_3e_1=e_6$ & $e_3e_2=2e_5$ & $e_4e_1=-e_5$ && 

\end{longtable}

\subsection{Central extensions of ${\mathcal N}^4_{05}$} Let us use the following notations
\[\nabla_1=[\Delta_{21}], \quad \nabla_2=-2[\Delta_{13}]+[\Delta_{14}]+[\Delta_{23}]+[\Delta_{32}]+[\Delta_{41}].\]

The automorphism group of ${\mathcal N}^4_{05}$ consists of invertible matrices of the form
\[\phi=\begin{pmatrix}
1 & 0 & 0  & 0\\
x& 1 & 0 & 0\\
y & 2x & 1 & 0 \\
z & x^2+x+2y & 3x &  1\\
\end{pmatrix}. \]

Since
\[ \phi^T\begin{pmatrix}
0 &0 &  -2\alpha_2 &  \alpha_2\\
\alpha_1 & 0 &  \alpha_2& 0  \\
0 & \alpha_2 & 0 & 0 \\
 \alpha_2 & 0 & 0 & 0
\end{pmatrix}\phi =
\begin{pmatrix}
\alpha^{***} &\alpha^{**} & -2\alpha_2^{*}+ \alpha^{*} &  \alpha_2^{*}\\
\alpha_1^{*}+\alpha^{*}+\alpha^{**} & \alpha^{*} &  \alpha_2^{*}& 0  \\
\alpha^{*} & \alpha_2^{*} & 0 & 0 \\
 \alpha_2^{*} & 0 & 0 & 0
\end{pmatrix},
\]
 we have that the action of ${\rm Aut} ({\mathcal N}^4_{05})$ on the subspace
$\langle \sum\limits_{i=1}^2\alpha_i\nabla_i  \rangle$
is given by
$\langle \sum\limits_{i=1}^2\alpha_i^{*}\nabla_i\rangle,$
where
\[\alpha_1^{*}=\alpha_1, \quad \alpha_2^{*}=\alpha_2. \]

\subsubsection{1-dimensional central extensions of ${\mathcal N}^4_{05}$}
Assuming  $\alpha_2\neq0,$ we have the family of representatives $\langle \lambda\nabla_1+\nabla_2\rangle.$

Hence, we have the following $5$-dimensional algebras:
\begin{longtable}{lllllll}
 ${\mathcal N}^5_{13}(\lambda)$ & :&
$e_1 e_1 = e_2$ & $e_1 e_2=e_3$&  $e_1 e_3=e_4-2e_5$ & $e_1e_4=e_5$ & $e_2 e_1= e_3 + e_4+\lambda e_5$\\
&&  $e_2 e_2= e_4$ & $e_2e_3=e_5$ & $e_3 e_1= e_4$ & $e_3e_2=e_5$ & $e_4e_1=e_5$
\end{longtable}

\subsubsection{2-dimensional central extensions of ${\mathcal N}^4_{05}$}
Since $\dim\left({\rm H^2}( {\mathcal N}^4_{05})\right)=2,$ there is only one 2-dimensional central extension of  ${\mathcal N}^4_{05}.$ Hence, we have the following $6$-dimensional algebra:
\begin{longtable}{lllllll}
 ${\mathcal N}^6_{12}$ & :&
$e_1 e_1 = e_2$ & $e_1 e_2=e_3$&  $e_1 e_3=e_4-2e_5$ & $e_1e_4=e_5$& $e_2 e_1= e_3 + e_4+e_6$  \\
&&  $e_2 e_2= e_4$&  $e_2e_3=e_5$ & $e_3 e_1= e_4$ & $e_3e_2=e_5$& $e_4e_1=e_5$
\end{longtable}

\

Summarizing the results of the present sections
and using the information from Table A from Appendix, we have the following theorem.

\begin{theorem}
Let $\mathcal N$ be a $5$-dimensional complex one-generated nilpotent Novikov algebra.
Then $\mathcal N$ is isomorphic to one of the algebras  ${\mathcal N}_j^5,$ $j=01, \ldots, 13$ listed in Table B presented in Appendix.
\end{theorem}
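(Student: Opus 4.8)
The plan is to apply the Procedure from Section 1.1 to each of the non-split one-generated nilpotent Novikov algebras of dimension at most $4$, namely ${\mathcal N}^2_{01}$, ${\mathcal N}^3_{01}$, ${\mathcal N}^3_{02}(\lambda)$, and ${\mathcal N}^4_{01}$ through ${\mathcal N}^4_{05}$, and then collect the resulting $5$-dimensional algebras. By Lemma (the central extension lemma) together with the Remark following the list of low-dimensional algebras, every $5$-dimensional one-generated nilpotent Novikov algebra arises as a non-split central extension of one of these lower-dimensional one-generated algebras; moreover a one-generated algebra must be a $1$-dimensional central extension of a one-generated algebra one dimension smaller (a $\ge 2$-dimensional central extension would force the quotient by the annihilator to have dimension $\le 3$, hence by the known low-dimensional list cannot produce something new one-generated in dimension $5$ except through the already-tabulated $2$-dimensional-annihilator cases ${\mathcal N}^5_{01}$ and ${\mathcal N}^5_{02}(\lambda)$ coming from \cite{kkk18}). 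So the bulk of the work is the $1$-dimensional central extensions of the four-dimensional algebras ${\mathcal N}^4_{01},\dots,{\mathcal N}^4_{05}$ (the $3$- and lower-dimensional cases only regenerate the already-listed ${\mathcal N}^4_j$ and lower, by \cite{kkk18}).

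For each ${\mathcal N}^4_j$ the steps are: (1) compute $\operatorname{Z}^2({\mathcal N}^4_j,\mathbb C)$, $\operatorname{B}^2({\mathcal N}^4_j,\mathbb C)$ and hence $\operatorname{H}^2({\mathcal N}^4_j,\mathbb C)$ — this data is recorded in Table A of the Appendix; (2) compute $\operatorname{Aut}({\mathcal N}^4_j)$ explicitly as a matrix group; (3) determine the induced $\operatorname{Aut}({\mathcal N}^4_j)$-action on $\operatorname{H}^2$ by conjugating the generic cocycle matrix $\sum\alpha_i\nabla_i$ by $\phi$, reading off the transformation laws for the $\alpha_i$; (4) find orbit representatives in $T_1({\mathcal N}^4_j)$, discarding those whose annihilator meets $\operatorname{Ann}({\mathcal N}^4_j)$ nontrivially (these would not be one-generated, or would have an annihilator component); (5) write down the algebra attached to each representative. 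This is precisely what is carried out section by section in the body above, yielding ${\mathcal N}^5_{03}(\lambda,\mu)$ and ${\mathcal N}^5_{04}(\lambda)$ from ${\mathcal N}^4_{01}$, ${\mathcal N}^5_{05}$ and ${\mathcal N}^5_{06}$ from ${\mathcal N}^4_{02}$, ${\mathcal N}^5_{07}(\lambda)$ for $\lambda\notin\{0,1\}$ from ${\mathcal N}^4_{03}(\lambda)$, ${\mathcal N}^5_{07}(0),{\mathcal N}^5_{08},{\mathcal N}^5_{09}(\lambda)$ from ${\mathcal N}^4_{03}(0)$, ${\mathcal N}^5_{07}(1)$ and ${\mathcal N}^5_{10}$ from ${\mathcal N}^4_{03}(1)$, ${\mathcal N}^5_{11}(\lambda),{\mathcal N}^5_{12}(\lambda)$ from ${\mathcal N}^4_{04}$, and ${\mathcal N}^5_{13}(\lambda)$ from ${\mathcal N}^4_{05}$; together with ${\mathcal N}^5_{01},{\mathcal N}^5_{02}(\lambda)$ this gives the list $j=01,\dots,13$ of Table B.

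The final point to settle is that the list has no redundancies across different base algebras and that the surviving parameter families are themselves non-isomorphic for distinct parameter values. Within a fixed base ${\mathcal N}^4_j$, Lemma (the orbit lemma, proved as in \cite[Lemma 17]{hac16}) guarantees that distinct $\operatorname{Aut}$-orbits give non-isomorphic extensions, so the representatives chosen in each subsection are pairwise non-isomorphic. Across different base algebras, one uses that the isomorphism type of ${\mathcal N}/\operatorname{Ann}({\mathcal N})$ is an invariant: if two of our $5$-dimensional algebras were isomorphic they would have isomorphic quotients by the annihilator, forcing the same ${\mathcal N}^4_j$ (or the same $2$-dimensional-annihilator base), a contradiction in all the listed cases since the ${\mathcal N}^4_j$ are pairwise non-isomorphic. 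For the one-parameter families one checks the claimed range of the parameter and distinctness using the residual part of the automorphism action already computed in step (3) (e.g. that $\alpha_i^*$ is a nonzero scalar multiple of $\alpha_i$ leaves projective parameters invariant).

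\textbf{Main obstacle.} The genuinely delicate part is not any single cohomology computation but the bookkeeping in step (4): correctly enumerating the $\operatorname{Aut}$-orbits on $\operatorname{H}^2$ — in particular keeping track of which cocycles have $\operatorname{Ann}(\theta)\cap\operatorname{Ann}({\mathcal N}^4_j)\neq 0$ (to be excluded) versus those that merely reproduce an algebra already on a lower-dimensional list (also to be excluded, per the Remark), and pinning down the exact parameter ranges for which a family stays inside $T_1$ and stays one-generated. The case ${\mathcal N}^4_{03}(\lambda)$ is the most involved because the automorphism action on $\alpha_1$ mixes in $\alpha_2$ via the parameter $\lambda$, so the orbit analysis genuinely splits according to whether $\lambda\in\{0,1\}$ — which is why that base algebra alone contributes the several distinct $5$-dimensional algebras ${\mathcal N}^5_{07}(\lambda)$, ${\mathcal N}^5_{08}$, ${\mathcal N}^5_{09}(\lambda)$, ${\mathcal N}^5_{10}$.
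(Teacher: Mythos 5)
Your proposal is correct and follows essentially the same route as the paper: the theorem there is obtained precisely by summarizing the section-by-section Skjelbred--Sund computations of the $1$-dimensional central extensions of ${\mathcal N}^4_{01},\dots,{\mathcal N}^4_{05}$ (using Table A, the automorphism groups, and the induced action on ${\rm H^2}$), combined with the $2$-dimensional-annihilator algebras ${\mathcal N}^5_{01},{\mathcal N}^5_{02}(\lambda)$ imported from \cite{kkk18}, with non-redundancy resting on the orbit lemma within each base and on the invariance of ${\mathcal N}/\operatorname{Ann}({\mathcal N})$ across bases. Your account of which $5$-dimensional algebras arise from which base coincides with the paper's, so there is nothing further to add.
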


\section{Classification of $6$-dimensional one-generated nilpotent Novikov algebras}

The multiplication tables of $5$-dimenensional one-generated nilpotent Novikov algebras are given in Table B presented in Appendix.
All the necessary information about
coboundaries, cocycles and  second cohomology spaces of $5$-dimensional one-generated nilpotent Novikov algebras is given in Table C presented in Appendix.

\subsection{Central extensions of ${\mathcal N}^5_{01}$}

Let us use the following notations
\[\nabla_1=[\Delta_{14}], \quad \nabla_2=2[\Delta_{13}]+[\Delta_{22}]+[\Delta_{41}], \quad  \nabla_3=2[\Delta_{15}]-[\Delta_{23}]-[\Delta_{51}].\]

The automorphism group of ${\mathcal N}^5_{01}$ consists of invertible matrices of the form
\[\phi=\begin{pmatrix}
x & 0 & 0  & 0& 0\\
y & x^2 & 0 & 0& 0\\
z & xy & x^3 & 0& 0 \\
v & xy & 0 &  x^3& 0\\
w & 0 & -x^2y &  x^2y& x^4\\
\end{pmatrix}. \]

Since
\[ \phi^T\begin{pmatrix}
0 &0 &  2\alpha_2 &  \alpha_1& 2\alpha_3\\
0 & \alpha_2 &  -\alpha_3& 0 & 0 \\
0 & 0 & 0 & 0 & 0\\
 \alpha_2 & 0 & 0 & 0 & 0\\
-\alpha_3 & 0 & 0 & 0 & 0
\end{pmatrix}\phi =
\begin{pmatrix}
\alpha^{****} & \alpha^{***} & 2\alpha_2^*+\alpha^* &  \alpha_1^* &2\alpha_3^*\\
\alpha^{**} & \alpha_2^* &  -\alpha_3^*& 0 &0 \\
-\alpha^* & 0 & 0 & 0 &0\\
\alpha_2^* & 0 & 0 & 0 &0\\
  -\alpha_3^* & 0 & 0 & 0 &0\\
\end{pmatrix},
\]
 we have that the action of ${\rm Aut} ({\mathcal N}^5_{01})$ on the subspace
$\langle \sum\limits_{i=1}^3 \alpha_i\nabla_i  \rangle$
is given by
$\langle \sum\limits_{i=1}^3 \alpha_i^* \nabla_i \rangle,$
where
\[\alpha_1^*=x^4\alpha_1+2x^3y\alpha_3, \quad \alpha_2^* =x^4\alpha_2-x^3y\alpha_3, \quad \alpha_3^* =x^5\alpha_3. \]

We assume $ \alpha_3\neq0$, since otherwise we obtain algebras with $2$-dimensional annihilator.
Then we have the following cases:
\begin{enumerate}
\item if $\alpha_1=-2\alpha_2,$ then choosing $y= \frac {\alpha_2x} {\alpha_3},$
we have the representative $\langle \nabla_3\rangle,$ which gives an algebra with $2$-dimensional annihilator;

\item if $\alpha_1\neq-2\alpha_2,$ then choosing $y= \frac {\alpha_2x} {\alpha_3}, x=\frac{\alpha_1+2\alpha_2}{\alpha_3}$, we have the representative $\langle \nabla_1+\nabla_3\rangle.$
\end{enumerate}

Hence, we have only one new algebra: 
\begin{longtable}{ll lllll}
${\mathcal N}^6_{13}$ &:& $e_1 e_1 = e_2$  & $e_1e_2=e_4$ & $e_1e_3=e_5$ & $e_1e_4=e_6$ &$e_1e_5=2e_6$  \\
&& $e_2 e_1=e_3$ & $e_2e_3=-e_6$ & $e_3e_1=-e_5$ & $e_5e_1=-e_6$ &
\end{longtable}

\subsection{Central extensions of ${\mathcal N}^5_{02}(\lambda)_{\lambda\neq0}$} Let us use the following notations
\[\nabla_1=[\Delta_{14}]-[\Delta_{41}], \quad \nabla_2=[\Delta_{13}]-\lambda[\Delta_{41}], \quad \nabla_3=(3-2\lambda)[\Delta_{15}]+\lambda(2-\lambda)[\Delta_{23}]+\lambda[\Delta_{32}]+\lambda[\Delta_{51}] .\]

The automorphism group of ${\mathcal N}^5_{02}(\lambda)$ consists of invertible matrices of the form
\[\phi=\begin{pmatrix}
x & 0 & 0  & 0& 0\\
y & x^2 & 0 & 0& 0\\
z & (\lambda+1)xy & x^3 & 0& 0 \\
v & xy & 0 &  x^3& 0\\
w & \lambda y^2+2xz & (-\lambda^2+2\lambda+2)x^2y &  \lambda^2(\lambda-1)x^2y& x^4\\
\end{pmatrix}. \]

Since
\[ \phi^T\begin{pmatrix}
0 &0 &\alpha_2 & \alpha_1& (3-2\lambda)\alpha_3\\
0 & 0 &  \lambda(2-\lambda)\alpha_3& 0 & 0 \\
0 & \lambda\alpha_3 & 0 & 0 & 0\\
 -\alpha_1-\lambda\alpha_2 & 0 & 0 & 0 & 0\\
\lambda\alpha_3 & 0 & 0 & 0 & 0
\end{pmatrix}\phi =
\begin{pmatrix}
\alpha^{****} & \alpha^{***} & \alpha_2^*+(2-\lambda)\alpha^{*} & \alpha_1^*& (3-2\lambda)\alpha_3^*\\
\alpha^{**} & \lambda \alpha^{*} &  \lambda(2-\lambda)\alpha_3^*& 0 & 0 \\
\lambda \alpha^{*} & \lambda\alpha_3^* & 0 & 0 & 0\\
- \alpha_1^*-\lambda\alpha_2^* & 0 & 0 & 0 & 0\\
\lambda\alpha_3^* & 0 & 0 & 0 & 0
\end{pmatrix},
\]
 we have that the action of ${\rm Aut} ({\mathcal N}^5_{02}(\lambda)_{\lambda\neq0})$ on the subspace $\langle \sum\limits_{i=1}^3 \alpha_i\nabla_i  \rangle$
is given by
$\langle \sum\limits_{i=1}^3 \alpha_i^* \nabla_i \rangle,$
where
\[\alpha_1^*=x^4\alpha_1+x^3y\lambda^2(\lambda-1)(3-2\lambda)\alpha_3, \quad \alpha_2^* =x^4\alpha_2+x^3y\lambda(\lambda-1)(\lambda-3)\alpha_3, \quad \alpha_3^* =x^5\alpha_3. \]

We assume $ \alpha_3\neq0$ and $(\alpha_1, \alpha_2) \neq (0, 0).$ Then we have the following cases
\begin{enumerate}
\item $\lambda=1.$ Then
\begin{enumerate}
\item $\alpha_2 \neq 0,$ then choosing $x=\frac{\alpha_2} {\alpha_3},$ we have the family of representatives $\langle \alpha\nabla_1+\nabla_2+\nabla_3\rangle;$
\item $\alpha_2 = 0,$ then choosing
$x= \frac{\alpha_1} {\alpha_3}$, we have the representative $\langle \nabla_1+\nabla_3\rangle.$
\end{enumerate}

\item $\lambda= 3.$ If $\alpha_2\neq 0,$ then choosing $x= \frac{\alpha_2} {\alpha_3},$ $y= \frac {x\alpha_1}{54\alpha_3}$
we have the representative $\langle \nabla_2+\nabla_3\rangle.$ If
$\alpha_2 = 0,$ then we have an algebra with $2$-dimensional annihilator.

\item  $\lambda\not \in \{ 1,  3\}.$ Then in the case of $\alpha_1 \neq \frac{\alpha_2\lambda(3-2\lambda)}{\lambda-3},$ choosing
$x=\frac{\alpha_1(\lambda-3)-\alpha_2\lambda(3-2\lambda)}{\alpha_3(\lambda-3)},$ $y=\frac {-x\alpha_2}{\alpha_3\lambda(\lambda-1)(\lambda-3)},$ 
we have the representative $\langle \nabla_1+\nabla_3\rangle.$ If
$\alpha_1 = \frac{\alpha_2\lambda(3-2\lambda)}{\lambda-3},$ then we have an algebra with $2$-dimensional annihilator.
\end{enumerate}

Thus, we have three representatives of distinct orbits
$\langle\alpha\nabla_1+\nabla_2+\nabla_3\rangle_{\lambda=1},  \langle\nabla_2+ \nabla_3\rangle_{\lambda=3}, 
\langle\nabla_1+ \nabla_3\rangle_{(\lambda \notin\{0; 3\})},$
which give the following algebras:

\begin{longtable}{ll lllll} ${\mathcal N}^6_{14}(\alpha)$ &:&
$e_1 e_1 = e_2$  & $e_1e_2=e_3$ & $e_1e_3=e_5+e_6$ & $e_1e_4=\alpha e_6$ &\\
&& $e_1e_5=e_6$ & $e_2 e_1=e_3+e_4$ & $e_2e_2=e_5$ & $e_2e_3=e_6$ &\\
&& $e_3e_1=e_5$ & $e_3e_2=e_6$  & $e_4e_1=-(\alpha+1)e_6$ & $e_5e_1=e_6$ & \\
 ${\mathcal N}^6_{15}$ &:&
$e_1 e_1 = e_2$  & $e_1e_2=e_3$ & $e_1e_3=-e_5 + e_6$ & $e_1e_5=-3 e_6$ &\\
&& $e_2 e_1= 3 e_3+e_4$  & $e_2e_2= 3 e_5$ & $e_2e_3=-3 e_6$ &  $e_3e_1=3 e_5$ &\\
&& $e_3e_2=3 e_6$  & $e_4e_1=-3 e_6$ & $e_5e_1=3 e_6$ &&\\
 ${\mathcal N}^6_{16}(\lambda)_{\lambda \notin\{0, 3\}}$ &:&
$e_1 e_1 = e_2$  & $e_1e_2=e_3$ & $e_1e_3=(2-\lambda)e_5 $ & $e_1e_4= e_6 $ &\\
& & $e_1e_5=(3-2\lambda)e_6$ & $e_2 e_1=\lambda e_3+e_4$  & $e_2e_2=\lambda e_5$ & $e_2e_3=\lambda(2-\lambda) e_6$ &\\
& & $e_3e_1=\lambda e_5$ & $e_3e_2=\lambda e_6$  & $e_4e_1=- e_6$ & $e_5e_1=\lambda e_6$ &
\end{longtable}

\subsection{Central extensions of ${\mathcal N}^5_{02}(0)$} Let us use the following notations
\[\nabla_1=[\Delta_{14}]-[\Delta_{41}], \quad \nabla_2=2[\Delta_{14}]+[\Delta_{22}]+[\Delta_{31}],  \quad \nabla_3=[\Delta_{15}], \quad \nabla_4=2[\Delta_{23}]-2[\Delta_{32}]+[\Delta_{51}].\]

The automorphism group of ${\mathcal N}^5_{02}(0)$ consists of invertible matrices of the form
\[\phi=\begin{pmatrix}
x & 0 & 0  & 0& 0\\
y & x^2 & 0 & 0& 0\\
z & xy & x^3 & 0& 0 \\
v & xy & 0 &  x^3& 0\\
w & 2xz & 2x^2y &  0& x^4\\
\end{pmatrix}. \]

Since
\[ \phi^T\begin{pmatrix}
0 &0 & 0 & \alpha_1+2\alpha_2& \alpha_3\\
0 & \alpha_2 &  2\alpha_4& 0 & 0 \\
\alpha_2 & -2\alpha_4 & 0 & 0 & 0\\
 -\alpha_1 & 0 & 0 & 0 & 0\\
\alpha_4 & 0 & 0 & 0 & 0
\end{pmatrix}\phi =
\begin{pmatrix}
\alpha^{****} & \alpha^{***} & \alpha^{**} & \alpha_1^*+2\alpha_2^*& \alpha_3^*\\
\alpha^{*} & \alpha_2^* &  2\alpha_4^*& 0 & 0 \\
\alpha_2^* & -2\alpha_4^* & 0 & 0 & 0\\
 -\alpha_1^* & 0 & 0 & 0 & 0\\
\alpha_4^* & 0 & 0 & 0 & 0
\end{pmatrix},
\]
 we have that the action of ${\rm Aut} ({\mathcal N}^5_{02}(0))$ on the subspace
$\langle \sum\limits_{i=1}^4 \alpha_i\nabla_i  \rangle$
is given by
$\langle \sum\limits_{i=1}^4 \alpha_i^* \nabla_i \rangle,$
where
\[\alpha_1^* =x^4\alpha_1, \quad \alpha_2^* =x^4\alpha_2, \quad \alpha_3^*=x^5\alpha_3, \quad \alpha_4^* =x^5\alpha_4. \]

We assume  $ (\alpha_1,\alpha_2)\neq(0,0)$ and $(\alpha_3,\alpha_4)\neq(0,0).$
Then we have the following cases:
\begin{enumerate}
\item if $\alpha_2=0, \alpha_4=0,$ then $\alpha_1\neq0$, $\alpha_3\neq 0$ and choosing $x=\frac{\alpha_1}{\alpha_3},$ we have the representative $\langle\nabla_1+\nabla_3\rangle;$

\item if $\alpha_2=0, \alpha_4\neq0,$ then $\alpha_1\neq0$ and choosing $x= \frac{\alpha_1} {\alpha_4},$ we have the family of representatives $\langle \nabla_1+\alpha\nabla_3+\nabla_4\rangle;$

\item if $\alpha_2\neq0, \alpha_4=0,$ then $\alpha_3\neq0$ and choosing $x=\frac{\alpha_2}{\alpha_3},$ we have the family of representatives $\langle\alpha \nabla_1+\nabla_2 +\nabla_3 \rangle;$

\item if $\alpha_2\neq0, \alpha_4\neq0,$ then choosing
$x= \frac{\alpha_2} {\alpha_4},$ we have the family of representatives $\langle \alpha\nabla_1+\nabla_2+\beta\nabla_3+\nabla_4\rangle.$

\end{enumerate}

Hence, we have the following  new algebras:
\begin{longtable}{ll lllllll}
${\mathcal N}^6_{16}(0)$ &:&
$e_1 e_1 = e_2$  & $e_1e_2=e_3$ & $e_1e_3=2e_5 $ & $e_1e_4= e_6 $ &&&\\
& & $e_1e_5=3e_6$ & $e_2 e_1=e_4$  & $e_4e_1=- e_6$ &  &\\

 ${\mathcal N}^6_{17}(\alpha)$ &:&
$e_1 e_1 = e_2$  & $e_1e_2=e_3$ & $e_1e_3=2e_5$ & $e_1e_4= e_6$ &  $e_1e_5=\alpha e_6$ & $e_2 e_1=e_4$ \\
&&  $e_2e_3=2e_6$ &  $e_3e_1=e_6$ & $e_3e_2=-2 e_6$  & $e_4e_1=-e_6$ & $e_5e_1= e_6$ \\

 ${\mathcal N}^6_{18}(\alpha)$ &:&
$e_1 e_1 = e_2$  & $e_1e_2=e_3$ & $e_1e_3=2e_5$ & $e_1e_4= (\alpha+2)e_6$ & $e_1e_5=e_6$ \\
&& $e_2 e_1=e_4$ & $e_2e_2=e_6$ &  $e_3e_1=e_6$ & $e_4e_1=-\alpha e_6$ &
\\
 ${\mathcal N}^6_{19}(\alpha, \beta)$ &:&
$e_1 e_1 = e_2$  & $e_1e_2=e_3$ & $e_1e_3=2e_5$ & $e_1e_4= (\alpha+2)e_6$ & $e_1e_5=\beta e_6$ & $e_2 e_1=e_4$ \\
&& $e_2e_2=e_6$ & $e_2e_3=2e_6$ & $e_3e_1=e_6$ & $e_3e_2=-2 e_6$  & $e_4e_1=-\alpha e_6$ & $e_5e_1= e_6$ &
\end{longtable}

\subsection{Central extensions of ${\mathcal N}^5_{03}(\lambda, \mu)$.}
Let us use the following notations
\[\nabla_1=
[\Delta_{13}], \quad  \nabla_2=
[\Delta_{14}] - [\Delta_{41}], \]  
\[\nabla_3=(2\mu-1)[\Delta_{15}]+\lambda(2-\mu) [\Delta_{23}]+\mu(2-\mu) [\Delta_{24}] +\Big(3 -(\lambda+1)(2-\mu) \Big)[\Delta_{32}]+(2-\mu)[\Delta_{42}]+(2-\mu)[\Delta_{51}].\]

 The automorphism group of ${\mathcal N}^5_{03}(\lambda, \mu)$ consists of invertible matrices of the form
\[\phi=\begin{pmatrix}
               x & 0 & 0 & 0 & 0 \\
               y & x^2 & 0 & 0 & 0 \\
               z & xy & x^3 & 0 & 0 \\
               v & xy & 0 & x^3 & 0 \\
               w & (\lambda+1)xz+2xv+y^2 & (\lambda+\mu+1)x^2y & (4-\mu)x^2y & x^4
             \end{pmatrix}. \]

 Since
\[ \phi^T\begin{pmatrix}
0 & 0 & \alpha_{1} & \alpha_{2} & (2\mu-1) \alpha_{3} \\
               0 & 0 & \lambda(2-\mu) \alpha_{3} & \mu(2-\mu) \alpha_{3} & 0 \\
               0 & \Big(3 - (\lambda+1)(2-\mu) \Big) \alpha_{3} & 0 & 0 & 0\\
               -\alpha_{2} & (2-\mu) \alpha_{3} & 0 & 0 & 0 \\
               (2-\mu)\alpha_{3} & 0 & 0 & 0 & 0
\end{pmatrix}\phi =\]\[
\begin{pmatrix}
\alpha^{****} & \alpha^{***} & \alpha_{1}^*+ \lambda \alpha^{*} & \alpha_{2}^*+ \mu \alpha^{*} & (2\mu-1)\alpha_{3}^* \\
               \alpha^{**} & \alpha^{*} & \lambda(2-\mu)\alpha_{3}^* & \mu(2-\mu)\alpha_{3}^* & 0 \\
               \alpha^{*} & \Big(3 - (\lambda+1)(2-\mu) \Big) \alpha_{3}^* & 0 & 0 & 0\\
               (2-\mu)\alpha^{*}-\alpha_{2}^* & (2-\mu) \alpha_{3}^* & 0 & 0 & 0 \\
               (2-\mu) \alpha_{3}^* & 0 & 0 & 0 & 0
\end{pmatrix},
\]
we have that the action of ${\rm Aut} ({\mathcal N}^5_{03}(\lambda, \mu))$ on the subspace
$\langle \sum\limits_{i=1}^3 \alpha_i\nabla_i  \rangle$
is given by
$\langle \sum\limits_{i=1}^3 \alpha_i^* \nabla_i \rangle,$
where
\[\alpha_1^* = x^4 \alpha_1 + x^3y \alpha_3 (\mu+1)\Big(2\mu-1 +\lambda(\mu-2) \Big), \quad
\alpha_2^* = x^4 \alpha_2 + x^3y\alpha_3 (\mu-1)(\mu-2)^2, \quad
\alpha_3^*=x^5 \alpha_3.\]

Assuming $\alpha_3\neq 0,$  we have the following cases:
\begin{enumerate}

\item if  $\mu \notin\{1, 2\},$ then choosing $y=-\frac{x\alpha_2}{(\mu-1)(\mu-2)^2\alpha_3}$, we get $\alpha_2^*=0$ and
have the representatives $\langle\nabla_3\rangle$ and $\langle\nabla_1 + \nabla_3\rangle,$ depending on whether
$\alpha_1 - \frac{(\mu+1)(2\mu-1+\lambda(\mu-2))}{(\mu-1)(\mu-2)^2}\alpha_2 =0$ or not;

\item if  $\mu = 1,$ then we consider following subcases
    \begin{enumerate}

    \item if  $\lambda \neq 1,$ then  choosing $y=\frac{x\alpha_1}{2(\lambda-1)\alpha_3}$, we 
    have the representatives $\langle\nabla_3\rangle$ and $\langle\nabla_2 + \nabla_3\rangle,$
    depending on whether $\alpha_2=0$ or not;
    \item if  $\lambda = 1,$ then we get $\alpha_1^* = x^4 \alpha_1, \
\alpha_2^* = x^4 \alpha_2, \ \alpha_3^*=x^5 \alpha_3$ and have the representatives $\langle\nabla_3\rangle,$  $\langle\nabla_1 + \nabla_3\rangle$ and $\langle  \nu \nabla_1 +  \nabla_2 + \nabla_3\rangle$;

    \end{enumerate}
    
\item if  $\mu = 2,$  then choosing $y=-\frac{x\alpha_1}{9\alpha_3}$, we 
    have the representatives $\langle\nabla_3\rangle$ and $\langle\nabla_2 + \nabla_3\rangle,$
    depending on whether $\alpha_2=0$ or not.

\end{enumerate}

Thus, we obtain the following representatives of distinct orbits
\[\langle\nabla_3\rangle,  \quad \langle\nabla_1+ \nabla_3\rangle_{\mu\neq2}, \quad
\langle\nabla_2+ \nabla_3\rangle_{\mu =1}, \quad \langle  \nu \nabla_1 +  \nabla_2 + \nabla_3\rangle_{\lambda=1, \mu=1, \nu\neq 0}, \quad \langle\nabla_2+ \nabla_3\rangle_{\mu =2},\]

Hence, we have  the  following new algebras:
\begin{longtable}{ll lllll}
${\mathcal N}^6_{20}(\lambda, \mu)$ & : &
$e_1 e_1 = e_2$ &    $e_1e_2=e_3$ & $e_1 e_3=\lambda e_5$ & $e_1e_4=\mu e_5$ & $e_1e_5=(2\mu-1) e_6$\\
&& $e_2 e_1=e_4$ & $e_2 e_2=e_5$ &  $e_2 e_3=\lambda (2-\mu)e_6$ &  $e_2 e_4=\mu (2-\mu) e_6$ & \\
&& $e_3 e_1= e_5$&  $e_4e_1=(2-\mu)e_5$ & $e_3 e_2=\Big(3-(\lambda+1)(2-\mu) \Big)e_6$ & $e_4 e_2= (2-\mu)e_6$ & $e_5 e_1=(2-\mu)e_6$ \\
${\mathcal N}^6_{21}(\lambda, \mu)_{\mu\neq2}$ & : &
$e_1 e_1 = e_2$ &    $e_1e_2=e_3$ & $e_1 e_3=\lambda e_5 +e_6$ & $e_1e_4=\mu e_5$ & $e_1e_5=(2\mu-1) e_6$\\
&& $e_2 e_1=e_4$ & $e_2 e_2=e_5$ &  $e_2 e_3=\lambda(2-\mu) e_6$ &  $e_2 e_4=\mu(2-\mu) e_6$ & \\
&& $e_3 e_1= e_5$ &  $e_4e_1=(2-\mu)e_5$ & $e_3 e_2=\Big(3-(\lambda+1)(2-\mu)\Big)e_6$ & $e_4 e_2= (2-\mu) e_6$ & $e_5 e_1= (2-\mu)e_6$ \\
${\mathcal N}^6_{22}(\lambda)$ & : &
$e_1 e_1 = e_2$ &    $e_1e_2=e_3$ & $e_1 e_3=\lambda e_5$ & $e_1e_4=e_5+e_6$ & $e_1e_5= e_6$\\
&& $e_2 e_1=e_4$ & $e_2 e_2=e_5$ &  $e_2 e_3=\lambda e_6$ &  $e_2 e_4=  e_6$ & \\
&& $e_3 e_1= e_5$&  $e_4e_1= e_5-e_6$ & $e_3 e_2=(2-\lambda)e_6$ & $e_4 e_2= e_6$ & $e_5 e_1=e_6$ \\
${\mathcal N}^6_{23}(\nu)_{\nu\neq0}$ & : &
$e_1 e_1 = e_2$ &    $e_1e_2=e_3$ & $e_1 e_3= e_5 +  \nu e_6$ & $e_1e_4=e_5+e_6$ & $e_1e_5= e_6$\\
&& $e_2 e_1=e_4$ & $e_2 e_2=e_5$ &  $e_2 e_3= e_6$ &  $e_2 e_4=  e_6$ & \\
&& $e_3 e_1= e_5$&  $e_4e_1= e_5-e_6$ & $e_3 e_2=e_6$ & $e_4 e_2= e_6$ & $e_5 e_1=e_6$ \\

${\mathcal N}^6_{24}(\lambda)$ & : &
$e_1 e_1 = e_2$ &    $e_1e_2=e_3$ & $e_1 e_3=\lambda e_5$ & $e_1e_4=2 e_5 + e_6$ & $e_1e_5=e_6$\\
&& $e_2 e_1=e_4$ & $e_2 e_2=e_5$ & $e_3 e_1= e_5$&  $e_3e_2=e_6$ &
\end{longtable}

\relax
\subsection{Central extensions of ${\mathcal N}^5_{04}(\lambda)_{\lambda \neq 0}$}

Let us use the following notations
  \[\nabla_1=
[\Delta_{14}] - [\Delta_{41}], \quad \nabla_2=
[\Delta_{22}]+[\Delta_{31}]+2[\Delta_{41}], \quad \nabla_3=-2[\Delta_{15}]+[\Delta_{23}]+\lambda[\Delta_{24}]-[\Delta_{32}]+[\Delta_{51}].
 \]

 The automorphism group of ${\mathcal N}^5_{04}(\lambda)$ consists of invertible matrices of the form
\[\phi=\begin{pmatrix}
               x & 0 & 0 & 0 & 0 \\
               y & x^2 & 0 & 0 & 0 \\
               z & xy & x^3 & 0 & 0 \\
               v & xy & 0 & x^3 & 0 \\
               w & xz & (\lambda+1)x^2y & -\lambda x^2y & x^4
             \end{pmatrix}. \]
 Since
\[ \phi^T\begin{pmatrix}
0 & 0 & 0 & \alpha_{1} & -2\alpha_{3} \\
               0 & \alpha_{2} & \alpha_{3} & \lambda\alpha_{3} & 0 \\
               \alpha_{2} & -\alpha_{3} & 0 & 0 & 0\\
               2\alpha_{2}-\alpha_{1} & 0 & 0 & 0 & 0 \\
               \alpha_{3} & 0 & 0 & 0 & 0
\end{pmatrix}\phi =
\begin{pmatrix}
\alpha^{****} & \alpha^{***} & \alpha^{*} & \alpha_{1}^*+\lambda \alpha^{*} & -2\alpha_{3}^* \\
               \alpha^{**} & \alpha_{2}^* & \alpha_{3}^* & \lambda\alpha_{3}^* & 0 \\
               \alpha_{2}^* & -\alpha_{3}^* & 0 & 0 & 0\\
               2\alpha_{2}^*-\alpha_{1}^*-\lambda\alpha^{*} & 0 & 0 & 0 & 0 \\
               \alpha_{3}^* & 0 & 0 & 0 & 0
\end{pmatrix},
\]
 we have that the action of ${\rm Aut} ({\mathcal N}^5_{04}(\lambda))$ on the subspace
$\langle \sum\limits_{i=1}^3 \alpha_i\nabla_i  \rangle$
is given by
$\langle \sum\limits_{i=1}^3 \alpha_i^* \nabla_i \rangle,$
where
\[\alpha_1^* = x^4 \alpha_1 + 2x^3y \lambda(\lambda+2)\alpha_3, \quad
\alpha_2^* = x^4 \alpha_2 + x^3y \lambda\alpha_3, \quad
\alpha_3^*=x^5 \alpha_3.\]

Assuming  $\alpha_3\neq 0,$  we have the following cases:
\begin{enumerate}

\item if  $\alpha_1= 2(\lambda+2)\alpha_2,$ then  choosing $ y=-\frac{x\alpha_2}{\lambda\alpha_3}$, we have the representative $\langle\nabla_3\rangle;$
\item if  $\alpha_1\neq 2(\lambda+2)\alpha_2,$ then choosing $ x= \frac {\alpha_1-2(\lambda+2)\alpha_2}{\alpha_3},$ $ y=-\frac{x\alpha_2}{\lambda\alpha_3}$, we have the representative $\langle \nabla_1+\nabla_3\rangle.$
\end{enumerate}
Hence, we have  the  following new algebras:

\relax
 
\[ \begin{array}{ll lllllll}
{\mathcal N}^6_{25}(\lambda)_{\lambda \neq 0} & : &
e_1 e_1 = e_2 &    e_1e_2=e_3 & e_1 e_3=e_5 & e_1e_4=\lambda e_5 & e_1e_5=-2 e_6 & e_2 e_1=e_4 \\ 
&&  e_2e_3=e_6 & e_2e_4=\lambda e_6 & e_3e_2= - e_6 &  e_4 e_1=-\lambda e_5&  e_5e_1=e_6    \\
{\mathcal N}^6_{26}(\lambda)_{\lambda \neq 0} & : &
e_1 e_1 = e_2 &    e_1e_2=e_3 & e_1 e_3=e_5 & e_1e_4=\lambda e_5 +e_6 & e_1e_5=-2 e_6& e_2 e_1=e_4\\
& &  e_2e_3=e_6 & e_2e_4=\lambda e_6 & e_3e_2= - e_6 & e_4 e_1=-\lambda e_5-e_6 & e_5e_1=e_6  
\end{array} \]

\relax
\subsection{Central extensions of ${\mathcal N}^5_{05}$} 
Let us use the following notations
\[\nabla_1=[\Delta_{12}], \quad \nabla_2=3[\Delta_{15}]-2[\Delta_{24}]+[\Delta_{33}]-[\Delta_{51}].\]

The automorphism group of ${\mathcal N}^5_{05}$ consists of invertible matrices of the form
\[\phi=\begin{pmatrix}
x & 0 & 0  & 0& 0\\
0 & x^2 & 0 & 0& 0\\
y & 0 & x^3 & 0& 0 \\
z & 0 & 0 &  x^4& 0\\
v & xz & -x^2y & 0& x^5\\
\end{pmatrix}. \]

Since
\[ \phi^T\begin{pmatrix}
0 & \alpha_1 &  0 &0 &  3\alpha_2\\
0 & 0 &  0 & -2\alpha_2 & 0 \\
0 & 0 & \alpha_2 & 0 & 0\\
0 & 0 & 0 & 0 & 0\\
-\alpha_2 & 0 & 0 & 0 & 0
\end{pmatrix}\phi =
\begin{pmatrix}
\alpha^{***} & \alpha_1^* &  -\alpha^{*} &0 &  3\alpha_2^*\\
\alpha^{**} & 0 &  0 & -2\alpha_2^* & 0 \\
\alpha^{*} & 0 & \alpha_2^* & 0 & 0\\
0 & 0 & 0 & 0 & 0\\
-\alpha_2^* & 0 & 0 & 0 & 0
\end{pmatrix},
\]
 we have that the action of ${\rm Aut} ({\mathcal N}^5_{05})$ on the subspace
$\langle \sum\limits_{i=1}^2 \alpha_i\nabla_i  \rangle$
is given by
$\langle \sum\limits_{i=1}^2 \alpha_i^* \nabla_i \rangle,$
where
\[\alpha_1^*=x^3 \alpha_1  + 3 x^2z \alpha_2, \quad
\alpha_2^* =x^6\alpha_2. \]

It is easy to see that we have only one non-trivial orbit with the representative $\langle \nabla_2 \rangle$ and obtain the algebra:
\begin{longtable}{lllllllll}
${\mathcal N}^6_{27}$ &:&
$e_1 e_1 = e_2$ &    $e_1 e_3=e_4$ & $e_1e_4=2e_5$ & $e_1e_5=3e_6$ 
& $e_2 e_1=e_3$ & $e_2e_3=-e_5$ \\ & & $e_2e_4=-2e_6$ & $e_3 e_1=-e_4$  &
$e_3e_3=e_6$ & $e_4e_1=-e_5$ & $e_5e_1=-e_6$ 
\end{longtable}

\subsection{Central extensions of ${\mathcal N}^5_{06}$} Let us use the following notations
\[\nabla_1=[\Delta_{12}], \quad \nabla_2=3[\Delta_{15}]-[\Delta_{22}]-2[\Delta_{24}]-2[\Delta_{31}]+[\Delta_{33}]-[\Delta_{51}].\]

The automorphism group of ${\mathcal N}^5_{06}$ consists of invertible matrices of the form
\[\phi_1=\begin{pmatrix}
1 & 0 & 0  & 0& 0\\
0 & 1 & 0 & 0& 0\\
x & 0 & 1 & 0& 0 \\
y & 0 & 0 &  1& 0\\
z & y & -x &  0& 1\\
\end{pmatrix}, \quad
\phi_2=\begin{pmatrix}
-1 & 0 & 0  & 0& 0\\
0 & 1 & 0 & 0& 0\\
x & 0 & -1 & 0& 0 \\
y & 0 & 0 &  1& 0\\
z & -y & -x &  0& -1\\
\end{pmatrix}.\]

Since
\[ \phi_i^T\begin{pmatrix}
0 & \alpha_1 &  0 &0 &  3\alpha_2\\
0 & -\alpha_2 &  0 & -2\alpha_2 & 0 \\
-2\alpha_2 & 0 & \alpha_2 & 0 & 0\\
0 & 0 & 0 & 0 & 0\\
-\alpha_2 & 0 & 0 & 0 & 0
\end{pmatrix}\phi_i =
\begin{pmatrix}
\alpha^{***} & \alpha_1^* &  \alpha^* &0 &  3\alpha_2^*\\
\alpha^{**} & -2\alpha^*_2 &  0 & -2\alpha_2^* & 0 \\
-2\alpha^*_2-\alpha^* & 0 & \alpha_2^* & 0 & 0\\
0 & 0 & 0 & 0 & 0\\
-\alpha_2^* & 0 & 0 & 0 & 0
\end{pmatrix},
\]
 we have that the action of ${\rm Aut} ({\mathcal N}^5_{06})$ on the subspace
$\langle \sum\limits_{i=1}^2 \alpha_i\nabla_i  \rangle$
is given by
$\langle \sum\limits_{i=1}^2 \alpha_i^* \nabla_i \rangle,$
where
\[\alpha_1^*= (-1)^i\alpha_1 + 3 y \alpha_2, \quad
\alpha_2^* =\alpha_2. \]
It is easy to see  that we have only one non-trivial orbit with the representative $\langle \nabla_2 \rangle$ and obtain the algebra:
\begin{longtable}{ll lllll}
${\mathcal N}^6_{28}$ & : &
$e_1 e_1 = e_2$ &    $e_1e_2=e_5$ & $e_1 e_3=e_4$ & $e_1e_4=2e_5$& $e_1e_5=3e_6$\\
&& $e_2 e_1=e_3$ & $e_2e_2=-e_6$ & $e_2e_3=-e_5$ & $e_2e_4=-2e_6$ &  $e_3 e_1=-e_4-2e_6$\\
&& $e_3e_3=e_6$  & $e_4e_1=-e_5$ & $e_5e_1=-e_6$
\end{longtable}

\subsection{Central extensions of ${\mathcal N}^5_{07}(\lambda)_{\lambda\notin \{0, 1\}}$} Let us use the following notations
\[\nabla_1=[\Delta_{21}], \quad \nabla_2= (4-3\lambda)[\Delta_{15}]+\lambda(3-2\lambda)[\Delta_{24}]+\lambda(2-\lambda)[\Delta_{33}]+\lambda[\Delta_{42}]+\lambda[\Delta_{51}].\]

The automorphism group of ${\mathcal N}^5_{07}(\lambda)_{\lambda\not \in \{0,1\}}$ consists of invertible matrices of the form
\[\phi=\begin{pmatrix}
x & 0 & 0  & 0& 0\\
0 & x^2 & 0 & 0& 0\\
0 & 0& x^3 & 0& 0 \\
y & 0 & 0 &  x^4& 0\\
z & (3-\lambda)xy & 0 &  0& x^5\\
\end{pmatrix}.\]

Since
 \[ \phi^T\begin{pmatrix}
0 & 0 &  0 &0 &  (4-3\lambda)\alpha_2\\
\alpha_1 & 0 &  0 & \lambda(3-2\lambda)\alpha_2 & 0 \\
0 & 0 & \lambda(2-\lambda)\alpha_2 & 0 & 0\\
0 & \lambda\alpha_2 & 0 & 0 & 0\\
\lambda\alpha_2 & 0 & 0 & 0 & 0
\end{pmatrix}\phi =\]
\[=\begin{pmatrix}
\alpha^{**} & \alpha^{*} &  0 &0 &  (4-3\lambda)\alpha_2^*\\
\alpha_1^*+\lambda \alpha^{*} & 0 &  0 & \lambda(3-2\lambda)\alpha_2^* & 0 \\
0 & 0 & \lambda(2-\lambda)\alpha_2^* & 0 & 0\\
0 & \lambda\alpha_2^* & 0 & 0 & 0\\
\lambda\alpha_2^* & 0 & 0 & 0 & 0
\end{pmatrix},
\]
 we have that the action of ${\rm Aut} ({\mathcal N}^5_{07}(\lambda)_{\lambda\not \in \{ 0,1\}})$ on the subspace
$\langle \sum\limits_{i=1}^2 \alpha_i\nabla_i  \rangle$
is given by
$\langle \sum\limits_{i=1}^2 \alpha_i^* \nabla_i \rangle,$
where
\[\alpha_1^*= x^3\alpha_1 -3x^2y\lambda(\lambda-1)(\lambda-2) \alpha_2, \quad
\alpha_2^* =x^6\alpha_2. \]

Assuming $ \alpha_2\neq0$, we have the following cases:
\begin{enumerate}
\item if $\lambda\neq 2,$ then choosing $ y=\frac{x\alpha_1}{3\lambda(\lambda-1)(\lambda-2)\alpha_2}$, we have the representative $\langle \nabla_2\rangle;$
\item if $\lambda=2,$ then we have two representatives $\langle\nabla_2\rangle$ and $\langle\nabla_1+\nabla_2\rangle$ depending on whether $\alpha_1=0$ or not.

\end{enumerate}
Hence, we have the  following new algebras:
\begin{longtable}{ll lllll}
${\mathcal N}^6_{29}(\lambda)_{\lambda \notin \{0, 1\}}$ & : &
$e_1 e_1 = e_2$&   $e_1 e_2=e_3$&    $e_1 e_3=(2-\lambda)e_4$ & $e_1e_4=(3-2\lambda) e_5$  \\
&&
$e_1e_5=(4-3\lambda) e_6$ & $e_2 e_1= \lambda e_3$ & $e_2e_2= \lambda e_4$ & $e_2e_3=\lambda (2-\lambda )e_5$\\
&& $e_2e_4=\lambda (3-2\lambda )e_6$ & $e_3e_1=\lambda e_4$ & $e_3e_2=\lambda e_5$ &  $e_3e_3=\lambda(2-\lambda) e_6$ 
\\
&& $e_4e_1=\lambda e_5$ & $e_4e_2=\lambda e_6$&  $e_5e_1=\lambda e_6$ \\

${\mathcal N}^6_{30}$ & : &
$e_1 e_1 = e_2$&   $e_1 e_2=e_3$& $e_1e_4=-e_5$ & $e_1e_5=-2e_6$ &\\
&& $e_2 e_1= 2 e_3 + e_6$ & $e_2e_2= 2 e_4$ & $e_2e_4=- e_6$ & $e_3e_1=2 e_4$ &\\
&& $e_3e_2=2 e_5$ &  $e_4e_1=2 e_5$ & $e_4e_2=2 e_6$&  $e_5e_1=2 e_6$ &
\end{longtable}

\subsection{Central extensions of ${\mathcal N}^5_{07}(0)$} Let us use the following notations
\[\nabla_1=[\Delta_{15}], \quad \nabla_2=[\Delta_{21}], \quad \nabla_3=2[\Delta_{23}]-2[\Delta_{32}]+[\Delta_{41}].\]

The automorphism group of ${\mathcal N}^5_{07}(0)$ consists of invertible matrices of the form
\[\phi=\begin{pmatrix}
x & 0 & 0  & 0& 0\\
y & x^2 & 0 & 0& 0\\
z & xy & x^3 & 0& 0 \\
v & 2xz & 2x^2y &  x^4& 0\\
w & 3xv & 6x^2z &  3x^3y& x^5\\
\end{pmatrix}. \]

Since
\[ \phi^T\begin{pmatrix}
0 &0 & 0 &  0& \alpha_1\\
\alpha_2 & 0 &  2\alpha_3& 0 & 0 \\
0 &-2\alpha_3 & 0 & 0 & 0\\
 \alpha_3 & 0 & 0 & 0 & 0\\
0 & 0 & 0 & 0 & 0
\end{pmatrix}\phi =
\begin{pmatrix}
\alpha^{****} & \alpha^{***} & \alpha^{**} &  \alpha^{*}& \alpha_1^*\\
\alpha_2^* & 0 &  2\alpha_3^*& 0 & 0 \\
0 &-2\alpha_3^* & 0 & 0 & 0\\
 \alpha_3^* & 0 & 0 & 0 & 0\\
0 & 0 & 0 & 0 & 0
\end{pmatrix},
\]
 we have that the action of ${\rm Aut} ({\mathcal N}^5_{07}(0))$ on the subspace
$\langle \sum\limits_{i=1}^3 \alpha_i\nabla_i  \rangle$
is given by
$\langle \sum\limits_{i=1}^3 \alpha_i^* \nabla_i \rangle,$
where
\[\alpha_1^*=x^6\alpha_1, \quad \alpha_2^* =x^3\alpha_2+2x(2xz-y^2)\alpha_3, \quad \alpha_3^* =x^5\alpha_3. \]

Assuming  $ \alpha_1\neq0$, we have the following cases:
\begin{enumerate}
\item if $\alpha_3\neq0,$ then choosing $x= \frac{\alpha_3} {\alpha_1}, y=0, z=-\frac{x\alpha_2}{4\alpha_3},$
we have the representative $\langle \nabla_1+\nabla_3\rangle;$

\item if $\alpha_3=0, \alpha_2\neq0, $ then choosing $x=\sqrt[3]{{\alpha_2} / {\alpha_1}},$
we have the representative $\langle \nabla_1+\nabla_2\rangle;$

\item if $\alpha_3=\alpha_2=0,$ then we have the representative $\langle \nabla_1\rangle.$

\end{enumerate}

Hence, we have the following new algebras:
\begin{longtable}{llllllll}
${\mathcal N}^6_{31}$ &: &
$e_1 e_1 = e_2$&  $e_1 e_2=e_3$& $e_1 e_3=2e_4$& $e_1e_4=3e_5$ & $e_1e_5=e_6$ \\
&& $e_2 e_3=2e_6$& $e_3e_2=-2e_6$ & $e_4e_1=e_6$& \\
${\mathcal N}^6_{32}$ &: &
$e_1 e_1 = e_2$&  $e_1 e_2=e_3$& $e_1 e_3=2e_4$& $e_1e_4=3e_5$ & $e_1e_5=e_6$& $e_2e_1=e_6$\\
${\mathcal N}^6_{29}(0)$ & : &
$e_1 e_1 = e_2$&   $e_1 e_2=e_3$&    $e_1 e_3=2e_4$ & $e_1e_4=3 e_5$ &
$e_1e_5=4 e_6$
\end{longtable}

\subsection{Central extensions of ${\mathcal N}^5_{07}(1)$} Let us use the following notations
\[\nabla_1=[\Delta_{21}], \quad \nabla_2=[\Delta_{15}]+[\Delta_{24}]+[\Delta_{33}]+[\Delta_{42}]+[\Delta_{51}].\]

The automorphism group of ${\mathcal N}^5_{07}(1)$ consists of invertible matrices of the form
\[\phi=\begin{pmatrix}
x & 0 & 0  & 0& 0\\
y & x^2 & 0 & 0& 0\\
z & 2xy & x^3 & 0& 0 \\
v & 2xz+y^2 & 3x^2y &  x^4& 0\\
w & 2xv+2yz & 3x^2z+3xy^2 &  4x^3y& x^5\\
\end{pmatrix}. \]

Since
\[ \phi^T\begin{pmatrix}
0 &0 & 0 &  0& \alpha_2\\
\alpha_1 & 0 & 0 & \alpha_2 & 0 \\
0 &0 & \alpha_2 & 0 & 0\\
 0 & \alpha_2 & 0 & 0 & 0\\
\alpha_2 & 0 & 0 & 0 & 0
\end{pmatrix}\phi =
\begin{pmatrix}
\alpha^{****} &\alpha^{***} & \alpha^{**} &  \alpha^{*} & \alpha_2^*\\
\alpha_1^*+\alpha^{***} & \alpha^{**} &  \alpha^{*} & \alpha_2^* & 0 \\
\alpha^{**} & \alpha^{*} & \alpha_2^* & 0 & 0\\
 \alpha^{*} & \alpha_2^* & 0 & 0 & 0\\
\alpha_2^* & 0 & 0 & 0 & 0
\end{pmatrix},
\]
 we have that the action of ${\rm Aut} ({\mathcal N}^5_{07}(1))$ on the subspace
$\langle \sum\limits_{i=1}^2 \alpha_i\nabla_i  \rangle$
is given by
$\langle \sum\limits_{i=1}^2 \alpha_i^* \nabla_i \rangle,$
where
\[\alpha_1^*=x^3\alpha_1, \quad \alpha_2^* =x^6\alpha_2. \]
Assuming $ \alpha_2\neq0$, we have the following cases:
\begin{enumerate}
\item if $\alpha_1=0,$ then we have the representative $\langle \nabla_2\rangle;$

\item if $\alpha_1\neq 0,$ then choosing $x=\sqrt[3]{{\alpha_1} / {\alpha_2}}$, we have the representative $\langle \nabla_1+\nabla_2\rangle.$

\end{enumerate}

Hence, we have the algebras:
\begin{longtable}{lllllll}
${\mathcal N}^6_{29}(1)$ & : &
$e_1 e_1 = e_2$&   $e_1 e_2=e_3$&    $e_1 e_3=e_4$ & $e_1e_4= e_5$ &
$e_1e_5= e_6$ \\
&& $e_2 e_1= e_3$ & $e_2e_2=  e_4$ & $e_2e_3=e_5$ & $e_2e_4=e_6$ & $e_3e_1= e_4$ \\
&& $e_3e_2= e_5$ &  $e_3e_3= e_6$ & $e_4e_1=e_5$ & $e_4e_2= e_6$&  $e_5e_1= e_6$ \\
${\mathcal N}^6_{33}$ & : &
$e_1 e_1 = e_2$&   $e_1 e_2=e_3$&    $e_1 e_3=e_4$ & $e_1e_4= e_5$ &$e_1e_5=e_6$ \\
&& $e_2 e_1= e_3 + e_6$ &  $e_2e_2=  e_4$ & $e_2e_3=e_5$ & $e_2e_4=e_6$ & $e_3e_1= e_4$ \\
&& $e_3e_2=e_5$ &  $e_3e_3= e_6$ & $e_4e_1= e_5$ & $e_4e_2= e_6$&  $e_5e_1= e_6$ 
\end{longtable}

\subsection{Central extensions of ${\mathcal N}^5_{08}$} Let us use the following notations
\[\nabla_1=[\Delta_{21}], \quad \nabla_2=2[\Delta_{15}]+[\Delta_{22}]+[\Delta_{31}], \quad  \nabla_3=[\Delta_{23}]-[\Delta_{32}]+[\Delta_{41}].\]

The automorphism group of ${\mathcal N}^5_{08}$ consists of invertible matrices of the form
\[\phi_1=\begin{pmatrix}
1 & 0 & 0  & 0& 0\\
x & 1 & 0 & 0& 0\\
y & x & 1 & 0& 0 \\
z & y & x &  1& 0\\
v & x+z & y &  x& 1\\
\end{pmatrix}, \quad
\phi_2=\begin{pmatrix}
-1 & 0 & 0  & 0& 0\\
x & 1 & 0 & 0& 0\\
y & -x & -1 & 0& 0 \\
z & -y & x &  1& 0\\
v & -x-z & y &  -x& -1\\
\end{pmatrix}.\]

Since
\[ \phi_i^T\begin{pmatrix}
0 &0 & 0 &  0& 2\alpha_2\\
\alpha_1 & \alpha_2 &  \alpha_3& 0 & 0 \\
 \alpha_2 &-\alpha_3 & 0 & 0 & 0\\
 \alpha_3 & 0 & 0 & 0 & 0\\
0 & 0 & 0 & 0 & 0
\end{pmatrix}\phi_i =
\begin{pmatrix}
\alpha^{****} & \alpha^{***} & \alpha^{**} &  \alpha^*& 2\alpha_2^*\\
\alpha_1^*+\alpha^* & \alpha_2^* &  \alpha_3^*& 0 & 0 \\
 \alpha_2^* &-\alpha_3^* & 0 & 0 & 0\\
 \alpha_3^* & 0 & 0 & 0 & 0\\
0 & 0 & 0 & 0 & 0
\end{pmatrix},
\]

we have that the action of ${\rm Aut} ({\mathcal N}^5_{08})$ on the subspace
$\langle \sum\limits_{i=1}^3 \alpha_i\nabla_i  \rangle$
is given by
$\langle \sum\limits_{i=1}^3 \alpha_i^* \nabla_i \rangle,$
where

$$\begin{array}{llll}
\mbox{ for } \ i=1:  & \alpha_1^*=\alpha_1-(x^2-2y)\alpha_3, & \alpha_2^* =\alpha_2, & \alpha_3^* =\alpha_3,\\
\mbox{ for } \ i=2:  & \alpha_1^*=-\alpha_1+(x^2+2y)\alpha_3, & \alpha_2^* =\alpha_2, & \alpha_3^* =-\alpha_3.
\end{array}$$

Assuming $ \alpha_2\neq0$ and using $\phi_1$ we have the following cases:
\begin{enumerate}
\item if $\alpha_3=0,$ then we have the family of representatives $\langle \lambda\nabla_1+\nabla_2\rangle;$

\item if $\alpha_3\neq0,$ then choosing $y=\frac{x^2\alpha_3-\alpha_1}{2\alpha_3},$ we have the family of representatives $\langle \nabla_2+\lambda\nabla_3\rangle_{\lambda\neq0}.$
\end{enumerate}

Hence, we have the following new  algebras:
\begin{longtable}{ll lllll}
${\mathcal N}^6_{34}(\lambda)$ & : &
$e_1 e_1 = e_2$ &   $e_1 e_2=e_3$ & $e_1 e_3=e_4$& $e_1e_4=e_5$&\\
&& $e_1e_5=2e_6$& $e_2e_1=e_5 + \lambda e_6$ & $e_2e_2=e_6$& $e_3e_1=e_6$&\\
 ${\mathcal N}^6_{35}(\lambda)_{\lambda \neq 0}$  & : &
$e_1 e_1 = e_2$ &   $e_1 e_2=e_3$ & $e_1 e_3=e_4$& $e_1e_4=e_5$&\\
&& $e_1e_5=2e_6$ & $e_2e_1=e_5$ & $e_2e_2=e_6$ & $e_2e_3=\lambda e_6$&\\
&& $e_3e_1=e_6$ & $e_3e_2=-\lambda e_6$& $e_4e_1=\lambda e_6$ &&
\end{longtable}

Moreover, using the action of $\phi_2$ we have that
${\mathcal N}^6_{34}(\lambda)\cong{\mathcal N}^6_{34}(-\lambda)$ and ${\mathcal N}^6_{35}(\lambda)\cong{\mathcal N}^6_{35}(-\lambda).$

\subsection{Central extensions of ${\mathcal N}^5_{10}$} Let us use the following notations
\[\nabla_1=[\Delta_{12}], \quad \nabla_2=-2[\Delta_{13}]+[\Delta_{15}]+[\Delta_{24}]+[\Delta_{33}]+[\Delta_{42}]+[\Delta_{51}].\]

The automorphism group of ${\mathcal N}^5_{10}$ consists of invertible matrices of the form
\[\phi_1=\begin{pmatrix}
1 & 0 & 0  & 0& 0\\
x & 1 & 0 & 0& 0\\
y & 2x & 1 & 0& 0 \\
z & x^2+2y & 3x &  1& 0\\
v & 2xy+x+2z & 3x^2+3y &  4x& 1\\
\end{pmatrix}, \quad
\phi_2=\begin{pmatrix}
-1 & 0 & 0  & 0& 0\\
x & 1 & 0 & 0& 0\\
y & -2x & -1 & 0& 0 \\
z & x^2-2y & 3x &  1& 0\\
v & 2xy-x-2z & -3x^2+3y &  -4x& -1\\
\end{pmatrix}.\]

Since
\[ \phi_i^T\begin{pmatrix}
0 & \alpha_1 &  -2\alpha_2 &0 &  \alpha_2\\
0 & 0 &  0 & \alpha_2 & 0 \\
0 & 0 & \alpha_2 & 0 & 0\\
0 & \alpha_2 & 0 & 0 & 0\\
\alpha_2 & 0 & 0 & 0 & 0
\end{pmatrix}\phi_i =
\begin{pmatrix}
\alpha^{****} & \alpha_1^*+\alpha^{***} &  -2\alpha_2^{*}+\alpha^{**} & \alpha^{*} &  \alpha_2^*\\
\alpha^{***} & \alpha^{**} &  \alpha^{*} & \alpha_2^* & 0 \\
\alpha^{**} & \alpha^{*} & \alpha_2^* & 0 & 0\\
\alpha^{*} & \alpha_2^* & 0 & 0 & 0\\
\alpha_2^* & 0 & 0 & 0 & 0
\end{pmatrix},
\]
 we have that the action of ${\rm Aut} ({\mathcal N}^5_{10})$ on the subspace
$\langle \sum\limits_{i=1}^2 \alpha_i\nabla_i  \rangle$
is given by
$\langle \sum\limits_{i=1}^2 \alpha_i^* \nabla_i \rangle,$
where
\[\alpha_1^*= (-1)^i\alpha_1 -4x \alpha_2, \quad
\alpha_2^* =\alpha_2. \]

It is easy to see that we have only one non-trivial orbit with representative $\langle \nabla_2 \rangle.$ The corresponding algebra is 
\begin{longtable}{lllllll}
${\mathcal N}^6_{36}$& : &
$e_1 e_1 = e_2$&   $e_1 e_2=e_3$&    $e_1 e_3=e_4-2e_6$ & $e_1e_4= e_5$ & $e_1e_5=e_6$ \\
&& $e_2 e_1= e_3+e_5$ & $e_2e_2= e_4 $& $e_2e_3=e_5$  &$e_2e_4=e_6$&  $e_3e_1=e_4$ \\
&& $e_3e_2=e_5$ & $e_3e_3=e_6$&  $e_4e_1=e_5 $& $e_4e_2=e_6$ & $e_5e_1=e_6$
\end{longtable}

\subsection{Central extensions of ${\mathcal N}^5_{11}(\lambda)$} Let us use the following notations
\[\nabla_1=[\Delta_{13}], \quad \nabla_2=[\Delta_{15}]+[\Delta_{32}], \quad \nabla_3=[\Delta_{14}]-2[\Delta_{23}]+2[\Delta_{32}]-[\Delta_{41}].\]

The automorphism group of ${\mathcal N}^5_{11}(\lambda)$ consists of invertible matrices of the form
\[\phi=\begin{pmatrix}
1 & 0 & 0  & 0& 0\\
x & 1 & 0 & 0& 0\\
y & x & 1 & 0& 0 \\
z & x+2y & 2x &  1& 0\\
v &  x^2+(\lambda+1)y+2z & (\lambda+3)x+4y &  2x& 1\\
\end{pmatrix}.\]

Since
\[ \phi^T\begin{pmatrix}
0 & 0 &  \alpha_1 & \alpha_3 &  \alpha_2\\
0 & 0 &  -2\alpha_3 & 0 & 0 \\
0 & \alpha_2+2\alpha_3 & 0 & 0 & 0\\
-\alpha_3 & 0 & 0 & 0 & 0\\
0 & 0 & 0 & 0 & 0
\end{pmatrix}\phi =
\begin{pmatrix}
\alpha^{****} & \alpha^{***} &  \alpha_1^*+2\alpha^{**}+\lambda \alpha^{*} & \alpha_3^*+2\alpha^{*} &  \alpha_2^*\\
\alpha^{**} & \alpha^{*} &   -2\alpha_3^* & 0 & 0 \\
\alpha^{*} & \alpha_2^*+2\alpha_3^* & 0 & 0 & 0\\
-\alpha_3^* & 0 & 0 & 0 & 0\\
0 & 0 & 0 & 0 & 0
\end{pmatrix},
\]
 we have that the action of ${\rm Aut} ({\mathcal N}^5_{11}(\lambda))$ on the subspace
$\langle \sum\limits_{i=1}^3 \alpha_i\nabla_i  \rangle$
is given by
$\langle \sum\limits_{i=1}^3 \alpha_i^* \nabla_i \rangle,$
where
\[\alpha_1^*= \alpha_1+(3 x + 4 y-2x^2)\alpha_2+(8y+2x-4x^2)\alpha_3
, \quad
\alpha_2^* =\alpha_2, \quad
\alpha_3^* =\alpha_3. \]

Assuming  $\alpha_2\neq0,$ we have the follofing cases 

\begin{enumerate}
    \item if $\alpha_2 \neq - 2\alpha_3,$ then choosing 
$y=-\frac {\alpha_1+x(3\alpha_2+2\alpha_3)-2x^2(\alpha_2+2\alpha_3)}{4(\alpha_2+2\alpha_3)},$
we have the family of orbits with the representatives 
$\langle \nabla_2 +\mu \nabla_3\rangle_{\mu\neq - \frac 1 2};$
\item if $\alpha_2 = - 2\alpha_3,$ then choosing 
$x=-\frac {\alpha_1}{2\alpha_2},$
we have the family of orbits with the representatives
$\langle \nabla_2 - \frac 1 2 \nabla_3\rangle.$
\end{enumerate}
 Thus, we obtain the family of algebras: 
 \begin{longtable}{ll lllll}
${\mathcal N}^6_{37}(\lambda, \mu)$ & :&
$e_1 e_1 = e_2$&   $e_1 e_2=e_3$&    $e_1 e_3=2e_4 + \lambda e_5$&  $e_1e_4=2e_5 +\mu e_6$&\\
&&$e_1e_5=e_6$ & $e_2 e_1= e_4$ & $e_2e_2=e_5$& $e_2e_3=-2\mu e_6$&\\
&& $e_3e_1=e_5$ & $e_3e_2=(1+2\mu) e_6$&  $e_4e_1=-\mu e_6$
\end{longtable}

\subsection{Central extensions of ${\mathcal N}^5_{13}(\lambda)$} Let us use the following notations
\[\nabla_1=[\Delta_{21}],  \quad \nabla_2=(1-2\lambda)[\Delta_{13}]-2[\Delta_{14}]+[\Delta_{15}]-[\Delta_{23}]+[\Delta_{24}]+[\Delta_{33}]+[\Delta_{41}]+[\Delta_{42}]+[\Delta_{51}].\]

The automorphism group of ${\mathcal N}^5_{13}(\lambda)$ consists of invertible matrices of the form
\[\phi=\begin{pmatrix}
1 & 0 & 0  & 0& 0\\
x & 1 & 0 & 0& 0\\
y & 2x & 1 & 0& 0 \\
z & x^2+x+2y & 3x &  1& 0\\
v & \lambda x+2xy-2y+2z & 3x^2-3x+3y &  4x& 1\\
\end{pmatrix}.\]

Since
\[ \phi^T\begin{pmatrix}
0 & \alpha_1 &  (1-2\lambda)\alpha_2 & -2\alpha_2 &  \alpha_2\\
0 & 0 &  -\alpha_2 & \alpha_2 & 0 \\
0 & 0 & \alpha_2 & 0 & 0\\
\alpha_2 & \alpha_2 & 0 & 0 & 0\\
\alpha_2 & 0 & 0 & 0 & 0
\end{pmatrix}\phi = \]
\[\begin{pmatrix}
\alpha^{****} & \alpha_1^*+\alpha^{***}  &  (1-2\lambda)\alpha_2^*+\alpha^{**}-2\alpha^* & -2\alpha_2^*+\alpha^* &  \alpha_2^*\\
\alpha^{***}+\alpha^{**}+\lambda\alpha^*  & \alpha^{**} &  -\alpha_2^*+\alpha^* & \alpha_2^* & 0 \\
\alpha^{**} & \alpha^* & \alpha_2^* & 0 & 0\\
\alpha_2^*+\alpha^* & \alpha_2^* & 0 & 0 & 0\\
\alpha_2^* & 0 & 0 & 0 & 0
\end{pmatrix},
\]
 we have that the action of ${\rm Aut} ({\mathcal N}^5_{13}(\lambda))$ on the subspace
$\langle \sum\limits_{i=1}^2 \alpha_i\nabla_i  \rangle$
is given by
$\langle \sum\limits_{i=1}^2 \alpha_i^* \nabla_i \rangle,$
where
\[\alpha_1^*= \alpha_1  -(\lambda x -x^2+x+y) \alpha_2, \quad
\alpha_2^* =\alpha_2. \]

It is easy to see  that we have only one non-trivial orbit with the representative $\langle \nabla_2 \rangle$ and obtain the family of algebras:
\begin{longtable}{lllllll}
${\mathcal N}^6_{38}(\lambda)$ & :&
$e_1 e_1 = e_2 $& $e_1 e_2=e_3$&  $e_1 e_3=e_4-2e_5+(1-2\lambda)e_6$ & $e_1e_4=e_5-2e_6$& $e_1e_5=e_6$  \\
&& $e_2 e_1= e_3 + e_4+\lambda e_5$ &  $e_2 e_2= e_4$&  $e_2e_3=e_5-e_6$ &$e_2e_4=e_6$&   $e_3 e_1= e_4$  \\
&& $e_3e_2=e_5$& $e_3e_3=e_6$& $e_4e_1=e_5+e_6$ & $e_4e_2=e_6$& $e_5e_1=e_6$ 
\end{longtable}

\

Summarizing the results of the present section
and using the information from Tables B and C from Appendix, we have the following theorem.

\begin{theorem}
Let $\mathcal N$ be a $6$-dimensional complex one-generated nilpotent Novikov algebra.
Then $\mathcal N$ is isomorphic to  ${\mathcal N}_j^6,$ where $j=01, \ldots,38$  (see Table D presented in Appendix).
\end{theorem}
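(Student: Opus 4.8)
The plan is to prove the theorem in the same way the $5$-dimensional case was treated, namely by running the \emph{Procedure} of Section~1 on every one-generated nilpotent Novikov algebra of dimension $4$ and $5$; the two Lemmas and the Remark of Section~1 turn the classification into this finite computation.

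\textbf{The reduction.} Let $\mathcal N$ be a $6$-dimensional one-generated nilpotent Novikov algebra and set $m=\dim\operatorname{Ann}(\mathcal N)\geq 1$. By the reduction Lemma, $\mathcal N\cong\mathcal N'_{\theta}$ where $\mathcal N'\cong\mathcal N/\operatorname{Ann}(\mathcal N)$ is a one-generated nilpotent Novikov algebra of dimension $6-m$, $\theta\in{\rm Z^2}(\mathcal N',\mathbb V)$ with $\dim\mathbb V=m$ and $\operatorname{Ann}(\mathcal N')\cap\operatorname{Ann}(\theta)=0$; by the Remark, $\mathcal N'$ has no annihilator component, and by the isomorphism Lemma the isomorphism type of $\mathcal N$ is encoded by the $\operatorname{Aut}(\mathcal N')$-orbit of $\langle[\theta_1],\dots,[\theta_m]\rangle\in{\bf T}_m(\mathcal N')$. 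Next I would bound $m$: if $m\geq 3$ then $\mathcal N'$ has dimension at most $3$, but a routine computation gives $\dim{\rm H^2}(\mathcal N',\mathbb C)\leq 2$ for every one-generated nilpotent Novikov algebra of dimension $\leq 3$, so ${\bf T}_m(\mathcal N')=\emptyset$, a contradiction. Hence $m\in\{1,2\}$, and it remains to determine all non-split $2$-dimensional central extensions of $\mathcal N^4_{01},\dots,\mathcal N^4_{05}$ (these are the $6$-dimensional algebras with $\dim\operatorname{Ann}=2$) and all non-split $1$-dimensional central extensions of $\mathcal N^5_{01},\dots,\mathcal N^5_{13}$ (those with $\dim\operatorname{Ann}=1$).

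\textbf{The orbit computations.} For each of these bases $\mathcal A$ I would carry out the three steps of the Procedure, which is exactly what the subsections of Sections~2 and~3 do: first read off ${\rm H^2}(\mathcal A,\mathbb C)$, $\operatorname{Ann}(\mathcal A)$ and $\operatorname{Aut}(\mathcal A)$ from the appendix tables; then write a general cocycle (resp.\ a general $s$-dimensional subspace) as $\langle\sum_i\alpha_i\nabla_i\rangle$ and compute the transformation law $\alpha_i\mapsto\alpha_i^{*}$ of the coordinates under the action of $\operatorname{Aut}(\mathcal A)$ (given on Gram matrices by $\theta\mapsto\phi^{T}\theta\phi$); finally intersect with the condition $\bigcap_{i=1}^{s}\operatorname{Ann}(\theta_i)\cap\operatorname{Ann}(\mathcal A)=0$ defining ${\bf T}_s(\mathcal A)$ together with the requirement that the resulting algebra be genuinely new (one-generated, with $m$-dimensional annihilator, and not already listed), select one representative per orbit and write down its multiplication table. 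The parametric bases $\mathcal N^4_{03}(\lambda)$, $\mathcal N^5_{02}(\lambda)$, $\mathcal N^5_{03}(\lambda,\mu)$ and $\mathcal N^5_{07}(\lambda)$ require splitting off the special values at which $\operatorname{Aut}$ degenerates ($\lambda\in\{0,1\}$, resp.\ $\mu\in\{1,2\}$); for $\mathcal N^5_{09}(\lambda)$ and $\mathcal N^5_{12}(\lambda)$ the set ${\bf T}_1$ turns out to be empty, so they contribute nothing. Assembling the output yields $\mathcal N^6_{01},\dots,\mathcal N^6_{12}$ from the $4$-dimensional bases and $\mathcal N^6_{13},\dots,\mathcal N^6_{38}$ from the $5$-dimensional ones, i.e.\ the full list of Table~D.

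\textbf{Distinctness and the main difficulty.} Finally I would check that these $38$ families are pairwise non-isomorphic, with the parameter identifications found along the way (such as $\mathcal N^6_{34}(\lambda)\cong\mathcal N^6_{34}(-\lambda)$): two extensions of the same base lying in distinct $\operatorname{Aut}(\mathcal A)$-orbits are non-isomorphic by the isomorphism Lemma, and the residual identifications of parameters are read off from the surviving automorphisms (as in the $\phi_2$-arguments for $\mathcal N^5_{08}$ and $\mathcal N^5_{10}$); two extensions of different bases are non-isomorphic because $\mathcal N/\operatorname{Ann}(\mathcal N)$ is an isomorphism invariant, so $\dim\operatorname{Ann}$ and then the isomorphism type of the $(6-m)$-dimensional quotient already separate them, while the notational overlaps (for instance $\mathcal N^6_{05}(\lambda)$ versus $\mathcal N^6_{16}(\lambda)$) are resolved by matching the limiting parameter values with the special cases computed separately. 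I expect the main obstacle to be the orbit enumeration for the bases with $\dim{\rm H^2}\geq 3$ and a several-parameter automorphism group — chiefly $\mathcal N^5_{02}(0)$, $\mathcal N^5_{03}(\lambda,\mu)$, $\mathcal N^5_{07}(0)$ and $\mathcal N^5_{08}$ — where the transformed coordinates $\alpha_i^{*}$ are coupled polynomials in the entries of $\phi$ and the original $\alpha_j$, so that some care is needed both to exhibit a representative for every orbit and to verify that each discarded representative gives an algebra that is not a new one-generated algebra; a secondary difficulty is to compile, for the last step, a uniform list of invariants (dimensions of $\operatorname{Ann}(\mathcal N)$, of the terms of the lower central series, and of $\mathcal N^2$, $\mathcal N\mathcal N^2$ and $\operatorname{Ann}(\mathcal N)\cap\mathcal N^2$) that separates all $38$ algebras.
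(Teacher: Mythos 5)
Your proposal follows essentially the same route as the paper: the Skjelbred--Sund-type procedure of Section~1 applied to the $4$-dimensional bases (the $2$-dimensional extensions of ${\mathcal N}^4_{01},\dots,{\mathcal N}^4_{05}$, giving ${\mathcal N}^6_{01}$--${\mathcal N}^6_{12}$) and to the $5$-dimensional bases of Table~B (the $1$-dimensional extensions, giving ${\mathcal N}^6_{13}$--${\mathcal N}^6_{38}$), with distinctness handled exactly as in the paper via the orbit lemma and the invariance of ${\mathcal N}/\operatorname{Ann}({\mathcal N})$. Your explicit bound $\dim\operatorname{Ann}({\mathcal N})\leq 2$ (via $\dim{\rm H^2}\leq 2$ in dimension $\leq 3$) and your observation that ${\mathcal N}^5_{09}$ and ${\mathcal N}^5_{12}$ contribute nothing are correct and merely make explicit what the paper leaves implicit.
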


\section*{Appendix}

\begin{longtable}{|lll|}
\hline
 \multicolumn{3}{|c|}{ \mbox{ {\bf \large{Table A.}}
{\it \large{The list of cohomology spaces of  $4$-dimensional one-generated nilpotent Novikov algebras.}}}} \\
\multicolumn{3}{|c|}{  } \\

\hline
${\rm Z^2}\left( {\mathcal N}^4_{01}\right)$ &$=$&$
\left\langle \begin{array}{l} \Delta_{11},\Delta_{12}, \Delta_{21},\Delta_{13},     \Delta_{14}-\Delta_{41}, \Delta_{22}+\Delta_{31}+2\Delta_{41} \end{array}  \right \rangle$\\ 

${\rm B^2}\left( {\mathcal N}^4_{01}\right)$ &$=$&$ 
\left\langle  \begin{array}{l} \Delta_{11},\Delta_{12}, \Delta_{21}  \end{array}    \right\rangle$  \\

${\rm H^2}\left( {\mathcal N}^4_{01}\right)$ &$=$&$ 
\left\langle\begin{array}{l} [\Delta_{13}], [\Delta_{14}]-[\Delta_{41}],   \relax  [\Delta_{22}]+[\Delta_{31}]+2[\Delta_{41}] \end{array}  \right\rangle$ \\
\hline

${\rm Z^2}\left( {\mathcal N}^4_{02}\right)$ &$=$&$ 
\left\langle \begin{array}{l}\Delta_{11},\Delta_{12}, \Delta_{13}-\Delta_{31},  \relax \Delta_{21}, 2\Delta_{14}-\Delta_{23}-\Delta_{41} \end{array} \right\rangle$          \\

${\rm B^2}\left( {\mathcal N}^4_{02}\right)$ &$=$&$
\left\langle \begin{array}{l}\Delta_{11},\Delta_{13}-\Delta_{31}, \Delta_{21}\end{array} \right\rangle$ \\

${\rm H^2}\left( {\mathcal N}^4_{02}\right)$ &$=$&$
\left \langle \begin{array}{l} [\Delta_{12}],2[\Delta_{14}]-[\Delta_{23}]-[\Delta_{41}] \end{array} \right\rangle$ \\

\hline

${\rm Z^2}\left( {\mathcal N}^4_{03}(\lambda)_{\lambda\not \in \{ 0, 1\} }\right)$ &$=$&
$\left\langle\begin{array}{l}\Delta_{11},\Delta_{12}, \Delta_{21},  (2-\lambda)\Delta_{13}+\lambda\Delta_{22}+\lambda\Delta_{31},\\
(3-2\lambda)\Delta_{14}+(2-\lambda)\lambda\Delta_{23}+  \lambda\Delta_{32}+\lambda\Delta_{41}\end{array} \right\rangle$         \\

${\rm B^2}\left( {\mathcal N}^4_{03}(\lambda)_{\lambda \not \in \{ 0, 1\} }\right)$ &$=$&
$\left\langle\begin{array}{l}\Delta_{11},\Delta_{12}+\lambda\Delta_{21},  (2-\lambda)\Delta_{13}+\lambda\Delta_{22}+\lambda\Delta_{31}\end{array} \right\rangle$ \\

${\rm H^2}\left( {\mathcal N}^4_{03}(\lambda)_{\lambda\not \in \{ 0, 1\} }\right)$ &$=$&
$\left\langle\begin{array}{l} [\Delta_{21}], 
(3-2\lambda)\Delta_{14}+(2-\lambda)\lambda\Delta_{23}+\lambda\Delta_{32}+\lambda\Delta_{41}\end{array}\right\rangle $ \\

\hline

${\rm Z^2}\left( {\mathcal N}^4_{03}(0) \right)$ &$=$&  
 $\left\langle \begin{array}{l} \Delta_{11},\Delta_{12},\Delta_{13},\Delta_{14},\Delta_{21}, \relax 2\Delta_{23}-2\Delta_{32}+\Delta_{41}\end{array} \right\rangle$  \\
 
${\rm B^2}\left( {\mathcal N}^4_{03}(0) \right)$ &$=$&  
$\left \langle \begin{array}{l} \Delta_{11}, \Delta_{12},\Delta_{13}  \end{array} \right\rangle$                \\

${\rm H^2}\left( {\mathcal N}^4_{03}(0) \right)$ &$=$&  
$\left\langle
  \begin{array}{l} [\Delta_{14}], [\Delta_{21}],  2[\Delta_{23}]-2[\Delta_{32}]+[\Delta_{41}] \end{array}  \right\rangle$ \\

\hline

${\rm Z^2}\left( {\mathcal N}^4_{03}(1) \right)$ &$=$&  
$\left\langle\begin{array}{l}\Delta_{11},\Delta_{12}, \Delta_{21},  \Delta_{13}+\Delta_{22}+\Delta_{31}, \Delta_{14}+\Delta_{23}+\Delta_{32}+\Delta_{41}\end{array} \right\rangle$ \\

${\rm B^2}\left( {\mathcal N}^4_{03}(1) \right)$ &$=$&  
$\left\langle\begin{array}{l}\Delta_{11},\Delta_{12}+\Delta_{21},   \relax \Delta_{13}+\Delta_{22}+\Delta_{31}\end{array} \right\rangle$                   \\ 

${\rm H^2}\left( {\mathcal N}^4_{03}(1) \right)$ &$=$&  
$\left\langle\begin{array}{l} [\Delta_{21}],   \relax [\Delta_{14}]+[\Delta_{23}]+[\Delta_{32}]+[\Delta_{41}]\end{array}\right\rangle$  \\

\hline

${\rm Z^2}\left( {\mathcal N}^4_{04} \right)$ &$=$&  
$\left\langle \begin{array}{l} \Delta_{11},\Delta_{12}, \Delta_{13}, \Delta_{21}, 2\Delta_{14}+\Delta_{22}+\Delta_{31}, \Delta_{14}-2\Delta_{23}+2\Delta_{32}-\Delta_{41} \end{array}   \right\rangle$  \\

${\rm B^2}\left( {\mathcal N}^4_{04} \right)$ &$=$&  
$\left\langle \begin{array}{l}\Delta_{11},\Delta_{12}, 2\Delta_{13}+\Delta_{21}\end{array}    \right \rangle$            \\

${\rm H^2}\left( {\mathcal N}^4_{04} \right)$ &$=$&  
  $\left\langle   \begin{array}{l}  [\Delta_{13}],
  2[\Delta_{14}]+[\Delta_{22}]+[\Delta_{31}],  \relax
  [\Delta_{14}]-2[\Delta_{23}]+2[\Delta_{32}]-[\Delta_{41}] \end{array}   \right\rangle$  \\

\hline
${\rm Z^2}\left( {\mathcal N}^4_{05} \right)$ &$=$&  
$\left\langle \begin{array}{l} \Delta_{11},\Delta_{12}, \Delta_{21},  \Delta_{13}+\Delta_{22}+\Delta_{31},  -2\Delta_{13}+  \Delta_{14}+\Delta_{23}+\Delta_{32}+\Delta_{41}\end{array}  \right \rangle$   \\        
${\rm B^2}\left( {\mathcal N}^4_{05} \right)$ &$=$&  
$\left\langle  \begin{array}{l} \Delta_{11},\Delta_{12}+\Delta_{21},  \Delta_{13}+\Delta_{21}+ \Delta_{22}+\Delta_{31}   \end{array}    \right\rangle$                     \\

${\rm H^2}\left( {\mathcal N}^4_{05} \right)$ &$=$&  
$\left\langle\begin{array}{l} [\Delta_{21}],  \relax
 -2[\Delta_{13}]+[\Delta_{14}]+[\Delta_{23}]+[\Delta_{32}]+[\Delta_{41}] \end{array}  \right\rangle$ \\
\hline

\end{longtable}

\begin{longtable}{|lllllll|}
\hline
\multicolumn{7}{|c|}{ \mbox{ {\bf Table B.}
{\it The list of $5$-dimensional one-generated nilpotent Novikov algebras.}}} \\
\multicolumn{7}{|c|}{ }
\\
\hline
${\mathcal N}^5_{01}$ &:& $e_1 e_1 = e_2$  & $e_1e_2=e_4$ & $e_1e_3=e_5$ & $e_2 e_1=e_3$ & $e_3e_1=-e_5$   \\
\hline
${\mathcal N}^5_{02}(\lambda)$ &:& $e_1 e_1 = e_2 $ & $e_1 e_2=e_3$ & $e_1e_3=(2-\lambda)e_5$ &&\\
&& $e_2 e_1=\lambda e_3+e_4$&  $e_2e_2=\lambda e_5$ & $e_3e_1=\lambda e_5$ &&\\
\hline
${\mathcal N}^5_{03}(\lambda, \mu)$ & :&
$e_1 e_1 = e_2$ & $e_1 e_2=e_3$&  $e_1e_3=\lambda e_5$& $e_1 e_4= \mu e_5$ & \\ 
&&  $e_2 e_1=e_4$ & $e_2 e_2= e_5$& $e_3e_1= e_5$ & $e_4 e_1= (2- \mu) e_5$ & \\
\hline
${\mathcal N}^5_{04}(\lambda)_{\lambda\neq0}$ & :&
$e_1 e_1 = e_2$ & $e_1 e_2=e_3$&  $e_1e_3= e_5$& $e_1 e_4= \lambda e_5$ & \\ 
&&  $e_2 e_1=e_4$ & $e_4e_1=-\lambda e_5$ & & & \\
\hline
${\mathcal N}^5_{05}$ &:&  
$e_1 e_1 = e_2$ &    $e_1 e_3=e_4$ & $e_1e_4=2e_5$ & $e_2 e_1=e_3$ &\\ 
&&  $e_2e_3=-e_5$ &  $e_3 e_1=-e_4$  & $e_4e_1=-e_5$ && \\
\hline
${\mathcal N}^5_{06}$ & : &
$e_1 e_1 = e_2$ &    $e_1e_2=e_5$ & $e_1 e_3=e_4$ & $e_1e_4=2e_5$ & \\
&& $e_2 e_1=e_3$ & $e_2e_3=-e_5$ &  $e_3 e_1=-e_4$  & $e_4e_1=-e_5$& \\
\hline
${\mathcal N}^5_{07}(\lambda)$ & : &
$e_1 e_1 = e_2$&   $e_1 e_2=e_3$&    $e_1 e_3=(2-\lambda)e_4$ & $e_1e_4=(3-2\lambda) e_5$ & $e_2 e_1= \lambda e_3$\\ 
&& $e_2e_2= \lambda e_4$ & $e_2e_3=\lambda (2-\lambda )e_5$ & $e_3e_1=\lambda e_4$ & $e_3e_2=\lambda e_5$ &  $e_4e_1=\lambda e_5$\\
\hline
${\mathcal N}^5_{08}$ & : & 
$e_1 e_1 = e_2$ &  $e_1 e_2=e_3$ & $e_1 e_3=e_4$& $e_1e_4=e_5$& $e_2e_1=e_5$\\
\hline
${\mathcal N}^5_{09}(\lambda)$ & : & 
$e_1 e_1 = e_2$&   $e_1 e_2=e_3$ & $e_1 e_3=2e_4$ & $e_1e_4= \lambda e_5$ &\\ 
&& $e_2e_3=2e_5$ & $e_3e_2=-2e_5$ & $e_4e_1=e_5$ &&\\
\hline
${\mathcal N}^5_{10}$ & : &
$e_1 e_1 = e_2$&   $e_1 e_2=e_3$&    $e_1 e_3=e_4$ & $e_1e_4= e_5$ & $e_2 e_1= e_3+e_5$\\ 
&&  $e_2e_2= e_4$ & $e_2e_3=e_5$ & $e_3e_1=e_4$ & $e_3e_2=e_5$ & $e_4e_1=e_5$ \\
\hline
${\mathcal N}^5_{11}(\lambda)$ & : &
$e_1 e_1 = e_2$ &   $e_1 e_2=e_3$ &    $e_1 e_3=2e_4 + \lambda e_5$ &  $e_1e_4=2e_5$ &\\ 
&& $e_2 e_1= e_4$ &  $e_2e_2=e_5$ & $e_3e_1=e_5$ &&\\
\hline
${\mathcal N}^5_{12}(\lambda)$ & : &
$e_1 e_1 = e_2$&   $e_1 e_2=e_3$&    $e_1 e_3=2e_4$ & $e_1e_4=(2\lambda+1) e_5$ & $e_2 e_1= e_4$\\ 
&&  $e_2e_2=\lambda e_5$ & $e_2e_3=-2e_5$ & $e_3e_1=\lambda e_5$ & $e_3e_2=2e_5$ & $e_4e_1=-e_5$\\
\hline
${\mathcal N}^5_{13}(\lambda)$ & :& 
$e_1 e_1 = e_2$ & $e_1 e_2=e_3$&  $e_1 e_3=e_4-2e_5$ & $e_1e_4=e_5$ & $e_2 e_1= e_3 + e_4+\lambda e_5$\\ 
&&  $e_2 e_2= e_4$ & $e_2e_3=e_5$ & $e_3 e_1= e_4$ & $e_3e_2=e_5$ & $e_4e_1=e_5$\\
\hline
\end{longtable}
$${\mathcal N}^5_{04}(0) \cong {\mathcal N}^5_{02}(0).$$

\begin{longtable}{|lll|}
\hline
 \multicolumn{3}{|c|}{ \mbox{ {\bf \large{Table C.}}
{\it \large{The list of cohomology spaces of  $5$-dimensional one-generated nilpotent Novikov algebras.}}}} \\
\multicolumn{3}{|c|}{  } \\

\hline
${\rm Z^2}\left( {\mathcal N}^5_{01}\right)$ &$=$&
$\left\langle \begin{array}{l}
\Delta_{11},\Delta_{12},\Delta_{13}-\Delta_{31}, \Delta_{14},  
\Delta_{21}, 2\Delta_{13}+\Delta_{22}+\Delta_{41},  2\Delta_{15}-\Delta_{23}-\Delta_{51} \end{array} \right\rangle$          \\

${\rm B^2}\left( {\mathcal N}^5_{01}\right)$ &$=$&
$\left\langle  \begin{array}{l}\Delta_{11}, \Delta_{12},\Delta_{13}-\Delta_{31}, \Delta_{21} \end{array} \right\rangle$\\

${\rm H^2}\left( {\mathcal N}^5_{01}\right)$ &$=$&
$\left \langle \begin{array}{l} [\Delta_{14}],  2[\Delta_{13}]+[\Delta_{22}]+[\Delta_{41}],   2[\Delta_{15}]-[\Delta_{23}]-[\Delta_{51}]\relax \end{array} \right\rangle$   \\
\hline

${\rm Z^2}\left( {\mathcal N}^5_{02}(\lambda)_{\lambda \neq 0}\right)$ &$=$&
$\left\langle
\begin{array}{l} \Delta_{11},\Delta_{12}, \Delta_{21},\Delta_{14}-\Delta_{41}, \Delta_{13}-\lambda\Delta_{41}, \Delta_{22}+\Delta_{31}+(2-\lambda)\Delta_{41},\\
 (3-2\lambda)\Delta_{15}+\lambda(2-\lambda)\Delta_{23}+\lambda\Delta_{32}+\lambda\Delta_{51}\end{array}   \right\rangle$ \\

${\rm B^2}\left( {\mathcal N}^5_{02}(\lambda)_{\lambda \neq 0}\right)$ &$=$&
$\left\langle\begin{array}{l} \Delta_{11},\Delta_{12}, \Delta_{21},  (2-\lambda)\Delta_{13}+\lambda\Delta_{22}+\lambda\Delta_{31} \end{array}   \right\rangle$\\

${\rm H^2}\left( {\mathcal N}^5_{02}(\lambda)_{\lambda \neq 0}\right)$ &$=$&
$\left\langle \begin{array}{l} [\Delta_{14}]-[\Delta_{41}], [\Delta_{13}]-\lambda[\Delta_{41}], (3-2\lambda)[\Delta_{15}]+\lambda(2-\lambda)[\Delta_{23}]+
 \lambda[\Delta_{32}]+\lambda[\Delta_{51}]\end{array}   \right\rangle$  \\
 \hline

${\rm Z^2}\left( {\mathcal N}^5_{02}(0) \right)$ &$=$&
$\left\langle \begin{array}{l} \Delta_{11}, \Delta_{12}, \Delta_{21},\Delta_{14}-\Delta_{41},  \Delta_{13},  2\Delta_{14}+\Delta_{22}+\Delta_{31},   \Delta_{15}, 2\Delta_{23}-2\Delta_{32}+\Delta_{51}\end{array}   \right\rangle$ \\

${\rm B^2}\left( {\mathcal N}^5_{02}(0) \right)$ &$=$&
$ \left\langle\begin{array}{l} \Delta_{11},\Delta_{12}, \Delta_{21}, \Delta_{13} \end{array}   \right\rangle$     \\

${\rm B^2}\left( {\mathcal N}^5_{02}(0) \right)$ &$=$&
$\left\langle \begin{array}{l} [\Delta_{14}]-[\Delta_{41}], 2[\Delta_{14}]+[\Delta_{22}]+[\Delta_{31}],  \relax [\Delta_{15}],  
   2[\Delta_{23}]-2[\Delta_{32}]+[\Delta_{51}]\end{array}   \right\rangle$  \\

\hline

${\rm Z^2}\left( {\mathcal N}^5_{03}(\lambda, \mu) \right)$ &$=$& 
$\left\langle \begin{array}{l}\Delta_{11},\Delta_{12}, \Delta_{21}, \Delta_{13},   \Delta_{14} - \Delta_{41},
\Delta_{22}+\Delta_{31}+2\Delta_{41},
(2\mu-1)\Delta_{15}+\lambda(2-\mu) \Delta_{23}+\\
\mu(2-\mu) \Delta_{24} + \Big(3 -(\lambda+1)(2-\mu) \Big)\Delta_{32}+ 
(2-\mu)\Delta_{42}+(2-\mu)\Delta_{51}  \end{array} \right\rangle$ \\

${\rm B^2}\left( {\mathcal N}^5_{03}(\lambda, \mu) \right)$ &$=$& 
$\left \langle \begin{array}{l} \Delta_{11},\Delta_{12}, \Delta_{21},    \lambda \Delta_{13}+ \mu \Delta_{14} +\Delta_{22}  +\Delta_{31} + (2-\mu)\Delta_{41}  \end{array} \right \rangle$ \\

${\rm H^2}\left( {\mathcal N}^5_{03}(\lambda, \mu) \right)$ &$=$& 
$\left \langle \begin{array}{l} [\Delta_{13}],   [\Delta_{14}] - [\Delta_{41}],  (2\mu-1)[\Delta_{15}]+\lambda(2-\mu) [\Delta_{23}]+
\mu(2-\mu) [\Delta_{24}] +\\ \Big(3 -(\lambda+1)(2-\mu) \Big)[\Delta_{32}]+ (2-\mu)[\Delta_{42}]+(2-\mu)[\Delta_{51}] \end{array} \right\rangle$
 \\
 
\hline

${\rm Z^2}\left( {\mathcal N}^5_{04}(\lambda)_{\lambda \neq 0} \right)$ &$=$&  
$\left\langle \begin{array}{l} \Delta_{11}, \Delta_{12}, \Delta_{13}, \Delta_{21},  \Delta_{14} - \Delta_{41},
 \Delta_{22}+\Delta_{31}+2\Delta_{41}, -2\Delta_{15}+ \Delta_{23}+\lambda\Delta_{24}-\Delta_{32}+\Delta_{51} \end{array} \right\rangle$ \\

${\rm B^2}\left( {\mathcal N}^5_{04}(\lambda)_{\lambda \neq 0} \right)$ &$=$&  
  $\left\langle \begin{array}{l}\Delta_{11},\Delta_{12}, \Delta_{21}, \Delta_{13}+ \lambda(\Delta_{14} - \Delta_{41}) \end{array}\right\rangle$ \\

${\rm H^2}\left( {\mathcal N}^5_{04}(\lambda)_{\lambda \neq 0} \right)$ &$=$&  
$\left\langle \begin{array}{l} \relax  [\Delta_{14}] - [\Delta_{41}],  [\Delta_{22}]+[\Delta_{31}]+2[\Delta_{41}] -2[\Delta_{15}]+ [\Delta_{23}]+\lambda[\Delta_{24}]-[\Delta_{32}]+[\Delta_{51}]   \end{array} \right\rangle$
  \\
  
 \hline
${\rm Z^2}\left( {\mathcal N}^5_{04}(0) \right)$ &$=$&  
$\left\langle
\begin{array}{l} \Delta_{11},\Delta_{12}, \Delta_{13},\Delta_{15},\Delta_{21}, \Delta_{14}-\Delta_{41},2\Delta_{14}+\Delta_{22}+\Delta_{31},  2\Delta_{23}-2\Delta_{32}+\Delta_{51}\end{array}   \right\rangle$ \\

${\rm B^2}\left( {\mathcal N}^5_{04}(0) \right)$ &$=$&  
$\left\langle  \begin{array}{l}\Delta_{11}, \Delta_{12},\Delta_{13}, \Delta_{21}  \end{array}\right\rangle$   \\                  

${\rm H^2}\left( {\mathcal N}^5_{04}(0) \right)$ &$=$&  
$\left\langle
\begin{array}{l} [\Delta_{15}], [\Delta_{14}]-[\Delta_{41}],  2[\Delta_{14}]+[\Delta_{22}]+[\Delta_{31}],  2[\Delta_{23}]-2[\Delta_{32}]+[\Delta_{51}]\end{array}   \right\rangle$ \\

\hline
${\rm Z^2}\left( {\mathcal N}^5_{05} \right)$ &$=$&  
$\left\langle
\begin{array}{l} \Delta_{11},\Delta_{12}, \Delta_{13}-\Delta_{31}, 2\Delta_{14}-\Delta_{23}-\Delta_{41}, \Delta_{21}, 3\Delta_{15}-2\Delta_{24}+\Delta_{33}-\Delta_{51}\end{array}   \right\rangle$  \\

${\rm B^2}\left( {\mathcal N}^5_{05} \right)$ &$=$&  
$\left\langle\begin{array}{l} \Delta_{11},\Delta_{21}, \Delta_{13}-\Delta_{31},  2\Delta_{14}-\Delta_{23}-\Delta_{41} \end{array}   \right\rangle$ \\       

${\rm H^2}\left( {\mathcal N}^5_{05} \right)$ &$=$&  
$\left\langle\begin{array}{l}  [\Delta_{12}], 3[\Delta_{15}]-2[\Delta_{24}]+[\Delta_{33}]-[\Delta_{51}]  \end{array}\right\rangle$ \\

\hline

${\rm Z^2}\left( {\mathcal N}^5_{06} \right)$ &$=$&  
   $\left\langle
\begin{array}{l} \Delta_{11},\Delta_{12}, \Delta_{13}-\Delta_{31},\Delta_{21}, 2\Delta_{14}-\Delta_{23}-\Delta_{41},  3\Delta_{15}-\Delta_{22}-2\Delta_{24}-2\Delta_{31}+\Delta_{33}-\Delta_{51} \end{array}   \right\rangle$       \\

${\rm B^2}\left( {\mathcal N}^5_{06} \right)$ &$=$&  
$\left\langle
\begin{array}{l} \Delta_{11},\Delta_{21}, \Delta_{13}-\Delta_{31}, \Delta_{12}+2\Delta_{14}-\Delta_{23}-\Delta_{41}\end{array}   \right\rangle$ \\

${\rm H^2}\left( {\mathcal N}^5_{06} \right)$ &$=$&  
$\left\langle\begin{array}{l} [\Delta_{12}],  3[\Delta_{15}]-[\Delta_{22}]-2[\Delta_{24}]- 2[\Delta_{31}]+[\Delta_{33}]-[\Delta_{51}] \end{array}  \right\rangle$        \\
\hline

${\rm Z^2}\left( {\mathcal N}^5_{07} (\lambda)_{\lambda\neq0} \right)$ &$=$&  
$\left\langle \begin{array}{l} \Delta_{11},\Delta_{12},\Delta_{21}, (2-\lambda)\Delta_{13}+\lambda\Delta_{22}+\lambda\Delta_{31},
(3-2\lambda)\Delta_{14}+\lambda(2-\lambda)\Delta_{23}+\lambda\Delta_{32}+\lambda\Delta_{41}, \\ (4-3\lambda)\Delta_{15}+\lambda(3-2\lambda)\Delta_{24}+
\lambda(2-\lambda)\Delta_{33}+\lambda\Delta_{42}+\lambda\Delta_{51}
\end{array}   \right\rangle$ \\

${\rm B^2}\left( {\mathcal N}^5_{07} (\lambda)_{\lambda\neq0} \right)$ &$=$&  
$\left\langle\begin{array}{l} \Delta_{11},\Delta_{12}+\lambda\Delta_{21}, (2-\lambda)\Delta_{13}+\lambda\Delta_{22}+\lambda\Delta_{31}, (3-2\lambda)\Delta_{14}+\lambda(2-\lambda)\Delta_{23}+ \lambda\Delta_{32}+\lambda\Delta_{41} \end{array}   \right\rangle$ \\

${\rm H^2}\left( {\mathcal N}^5_{07} (\lambda)_{\lambda\neq0} \right)$ &$=$&  
$\left\langle \begin{array}{l} [\Delta_{21}],  (4-3\lambda)[\Delta_{15}]+  \lambda(3-2\lambda)[\Delta_{24}]+ \lambda(2-\lambda)[\Delta_{33}]+\lambda[\Delta_{42}]+\lambda[\Delta_{51}]
\end{array}   \right\rangle$  \\
\hline

${\rm Z^2}\left( {\mathcal N}^5_{07} (0) \right)$ &$=$&  
$\left\langle \begin{array}{l} \Delta_{11},\Delta_{12}, \Delta_{13},\Delta_{14},\Delta_{15}, \Delta_{21},2\Delta_{23}-2\Delta_{32}+\Delta_{41}\end{array}   \right\rangle$ \\

${\rm B^2}\left( {\mathcal N}^5_{07} (0) \right)$ &$=$& 
$\left\langle \begin{array}{l} \Delta_{11},\Delta_{12}, \Delta_{13},\Delta_{14} \end{array}  \right\rangle$   \\

${\rm H^2}\left( {\mathcal N}^5_{07} (0) \right)$ &$=$&  
$\left\langle  \begin{array}{l}  [\Delta_{15}], [\Delta_{21}],2[\Delta_{23}]-2[\Delta_{32}]+[\Delta_{41}] \end{array}   \right\rangle$ \\

\hline

${\rm Z^2}\left( {\mathcal N}^5_{08}  \right)$ &$=$&  
$\left\langle \begin{array}{l} \Delta_{11},\Delta_{12}, \Delta_{13},\Delta_{14},\Delta_{21}, 2\Delta_{15}+\Delta_{22}+\Delta_{31}, \Delta_{23}-\Delta_{32}+\Delta_{41}
\end{array}   \right\rangle$ \\

${\rm B^2}\left( {\mathcal N}^5_{08}  \right)$ &$=$&  
$\left\langle \begin{array}{l}  \Delta_{11},\Delta_{12}, \Delta_{13},\Delta_{14}+\Delta_{21} \end{array}  \right\rangle$   \\

${\rm H^2}\left( {\mathcal N}^5_{08}  \right)$ &$=$&  
$\left\langle \begin{array}{l} [\Delta_{21}], 2[\Delta_{15}]+[\Delta_{22}]+[\Delta_{31}],  \relax [\Delta_{23}]- [\Delta_{32}]+ [\Delta_{41}] \end{array}   \right\rangle$  \\

\hline

${\rm Z^2}\left( {\mathcal N}^5_{09}(\lambda)  \right)$ &$=$&  
$\left\langle \begin{array}{l} \Delta_{11},\Delta_{12}, \Delta_{13},\Delta_{14},\Delta_{21}, 2\Delta_{23}-2\Delta_{32}+\Delta_{41}\end{array}   \right\rangle$ \\

${\rm B^2}\left( {\mathcal N}^5_{09}(\lambda)  \right)$ &$=$&   
$\left\langle\begin{array}{l} \Delta_{11},\Delta_{12}, \Delta_{13}, \lambda\Delta_{14}+2\Delta_{23}-2\Delta_{32}+\Delta_{41}\end{array}   \right\rangle$   \\

${\rm H^2}\left( {\mathcal N}^5_{09}(\lambda)  \right)$ &$=$&  
$\left\langle \begin{array}{l}  [\Delta_{14}], [\Delta_{21}] \end{array}   \right\rangle $    \\
\hline

${\rm Z^2}\left( {\mathcal N}^5_{10} \right)$ &$=$&  
$\left\langle \begin{array}{l} \Delta_{11},\Delta_{12},\Delta_{21},  \Delta_{13}+\Delta_{22}+\Delta_{31}, \relax \Delta_{14}+\Delta_{23}+\Delta_{32}+\Delta_{41}, \\ \relax -2\Delta_{13}+\Delta_{15}+\Delta_{24}+  \Delta_{33}+\Delta_{42}+\Delta_{51} \end{array}   \right\rangle$ \\

${\rm B^2}\left( {\mathcal N}^5_{10} \right)$ &$=$&  
$\left\langle \begin{array}{l} \Delta_{11},\Delta_{12}+\Delta_{21}, \relax \Delta_{13}+\Delta_{22}+\Delta_{31}, \relax  \Delta_{14}+\Delta_{21}+\Delta_{23}+\Delta_{32}+\Delta_{41}
\end{array}  \right\rangle$ \\

${\rm H^2}\left( {\mathcal N}^5_{10} \right)$ &$=$&  
$\left\langle \begin{array}{l} [\Delta_{12}],  -2[\Delta_{13}]+[\Delta_{15}]+[\Delta_{24}]+    [\Delta_{33}]+[\Delta_{42}]+[\Delta_{51}] \end{array}\right\rangle$ \\
 \hline

${\rm Z^2}\left( {\mathcal N}^5_{11}(\lambda)  \right)$ &$=$&  
$\left\langle \begin{array}{l} \Delta_{11},\Delta_{12}, \Delta_{13}, \Delta_{15}+\Delta_{32},  \Delta_{21}, 2\Delta_{14}+\Delta_{22}+\Delta_{31},   \Delta_{14}-2\Delta_{23}+2\Delta_{32}-\Delta_{41}
\end{array}   \right\rangle$\\

${\rm B^2}\left( {\mathcal N}^5_{11}(\lambda)  \right)$ &$=$&  
$\Big\langle \begin{array}{l} \Delta_{11},\Delta_{12}, 2\Delta_{13}+\Delta_{21},  \lambda\Delta_{13}+2\Delta_{14}+\Delta_{22}+\Delta_{31}  \end{array}   \Big\rangle$\\

${\rm H^2}\left( {\mathcal N}^5_{11}(\lambda)  \right)$ &$=$&  
$\left\langle\begin{array}{l}  [\Delta_{13}], [\Delta_{15}]+[\Delta_{32}],   \relax [\Delta_{14}]-2[\Delta_{23}]  +2[\Delta_{32}]-[\Delta_{41}] \end{array} \right\rangle$  \\
 \hline

${\rm Z^2}\left( {\mathcal N}^5_{12}(\lambda)  \right)$ &$=$&  
$\left\langle
\begin{array}{l} \Delta_{11},\Delta_{12}, \Delta_{13}, \Delta_{21},  \Delta_{14}-2\Delta_{23}+2\Delta_{32}-\Delta_{41},  \Delta_{22}+\Delta_{31}+2\Delta_{14} \end{array}   \right\rangle$\\

${\rm B^2}\left( {\mathcal N}^5_{12}(\lambda)  \right)$ &$=$&  
$\left\langle \begin{array}{l} \Delta_{11},\Delta_{12}, 2\Delta_{13}+\Delta_{21},  (1+2\lambda)\Delta_{14}+\lambda\Delta_{22}-2\Delta_{23}+\lambda\Delta_{31}+2\Delta_{32}-\Delta_{41}  \end{array}   \right\rangle$\\

${\rm H^2}\left( {\mathcal N}^5_{12}(\lambda)  \right)$ &$=$&  
$\left\langle \begin{array}{l}  [\Delta_{13}],   [\Delta_{14}]-2[\Delta_{23}]+ 2[\Delta_{32}]-[\Delta_{41}]
\end{array}   \right\rangle$ \\
   \hline

${\rm Z^2}\left( {\mathcal N}^5_{13}(\lambda)  \right)$ &$=$&  
$\Big\langle \begin{array}{l} \Delta_{11},\Delta_{12},\Delta_{21}, \Delta_{13}+\Delta_{22}+\Delta_{31}, -2\Delta_{13}+\Delta_{14}+\Delta_{23}+\Delta_{32}+\Delta_{41},\\
(1-2\lambda)\Delta_{13}-2\Delta_{14}+\Delta_{15}-\Delta_{23}+ \Delta_{24}+\Delta_{33}+\Delta_{41}+\Delta_{42}+\Delta_{51} \end{array}   \Big\rangle$  \\ 

${\rm B^2}\left( {\mathcal N}^5_{13}(\lambda)  \right)$ &$=$&  
$\left\langle \begin{array}{l}  \Delta_{11},\Delta_{12}+\Delta_{21},  \Delta_{13}+\Delta_{21}+\Delta_{22}+\Delta_{31}, -2\Delta_{13}+\Delta_{14}+\lambda\Delta_{21}+ \Delta_{23}+\Delta_{32}+\Delta_{41}\end{array} \right\rangle$\\

${\rm H^2}\left( {\mathcal N}^5_{13}(\lambda)  \right)$ &$=$&  
$\left\langle \begin{array}{l} 
 [\Delta_{12}],   (1-2\lambda)[\Delta_{13}]- 2[\Delta_{14}]+   [\Delta_{15}]- [\Delta_{23}]+[\Delta_{24}]+ \relax [\Delta_{33}]+ [\Delta_{41}]+[\Delta_{42}]+[\Delta_{51}] \end{array}\right \rangle$\\
  
     \hline
\end{longtable}

{\tiny 
\begin{longtable}{|ll lllll |}

\hline
\multicolumn{7}{|c|}{ \mbox{ \large{{\bf Table D.}}
\large{{\it The list of $6$-dimensional  one-generated nilpotent Novikov algebras}}}} \\
\multicolumn{7}{|c|}{ } \\

\hline ${\mathcal N}^6_{01}$ & : &
$e_1 e_1 = e_2$ &    $e_1e_2=e_3$ & $e_1 e_3=e_5$ & $e_1e_4=e_6$ &\\
&& $e_2 e_1=e_4$ & $e_4 e_1=-e_6$  &  & & \\

\hline ${\mathcal N}^6_{02}(\lambda)$ & : &
$e_1 e_1 = e_2$&   $e_1 e_2=e_3$ &    $e_1 e_3=e_5$ & $e_1e_4=\lambda e_6$ &   \\ 
&& $e_2 e_1= e_4$ & $e_2e_2=  e_6$ & $e_3e_1= e_6$  & $e_4e_1= (2-\lambda)e_6$ & \\

\hline ${\mathcal N}^6_{03}(\lambda, \mu)$ & : & 
$e_1 e_1 = e_2$ &  $e_1 e_2=e_3$ &    $e_1 e_3=\lambda e_5 + \mu e_6$ & $e_1e_4= e_5$ &   \\ 
&& $e_2 e_1= e_4$ & $e_2e_2=  e_6$ & $e_3e_1= e_6$  & $e_4e_1= -e_5+2e_6$ & \\
\hline
${\mathcal N}^6_{04}$ & : &
$e_1 e_1 = e_2$ &    $e_1e_2=e_5$ & $e_1 e_3=e_4$ & $e_1e_4=2e_6$ &\\
&& $e_2 e_1=e_3$ & $e_2e_3=-e_6$  & $e_3 e_1=-e_4$  & $e_4e_1=-e_6$ &\\

\hline ${\mathcal N}^6_{05}(\lambda)$ & : &
$e_1 e_1 = e_2$ &   $e_1 e_2=e_3$ &    $e_1 e_3=(2-\lambda)e_4$ & $e_1e_4=(3-2\lambda) e_5$ & 
$e_2 e_1= \lambda e_3+e_6$  \\ 
&&$e_2e_2= \lambda e_4$ & $e_2e_3=\lambda (2-\lambda )e_5$ & $e_3e_1=\lambda e_4$ & $e_3e_2=\lambda e_5$ &  $e_4e_1=\lambda e_5$ \\
\hline ${\mathcal N}^6_{06}(\lambda)$ & : & 
$e_1 e_1 = e_2$&  $e_1 e_2=e_3$& $e_1 e_3=2e_4$& $e_1e_4=\lambda e_5$& $e_2e_1=e_6$ \\
&& $e_2e_3=2e_5$ & $e_3e_2=-2e_5$ & $e_4e_1=e_5$ & &\\
 \hline ${\mathcal N}^6_{07}$ &: &
$e_1 e_1 = e_2$ &  $e_1 e_2=e_3$& $e_1 e_3=2e_4$& $e_1e_4= e_5$&\\ 
&& $e_2e_1=e_5$ & $e_2e_3=2e_6$ & $e_3e_2=-2e_6$& $e_4e_1=e_6$ & \\

\hline ${\mathcal N}^6_{08}$ & : & 
$e_1 e_1 = e_2$&   $e_1 e_2=e_3$& $e_1 e_3=2e_4$ & $e_1e_4= e_5$ & \\ 
&& $e_2e_3=2e_6$ & $e_3e_2=-2e_6$ & $e_4e_1=e_6$ && \\

\hline ${\mathcal N}^6_{09}$ & : &
$e_1 e_1 = e_2$ &   $e_1 e_2=e_3$ &    $e_1 e_3=2e_4+e_6$ & $e_1e_4=2e_5$ & \\ 
&& $e_2 e_1= e_4$ & $e_2e_2=e_5$& $e_3e_1=e_5$ && \\

\hline ${\mathcal N}^6_{10}(\lambda)$ & : &
$e_1 e_1 = e_2$&   $e_1 e_2=e_3$&    $e_1 e_3=2e_4+e_6$ & $e_1e_4=(2\lambda+1) e_5$ & $e_2 e_1= e_4$ \\ 
&& $e_2e_2=\lambda e_5$ & $e_2e_3=-2e_5$  &   $e_3e_1=\lambda e_5$ & $e_3e_2=2e_5$ & $e_4e_1=-e_5$  \\

\hline ${\mathcal N}^6_{11}(\lambda)$ & : &
$e_1 e_1 = e_2$   & $e_1 e_2=e_3$&    $e_1 e_3=2e_4+\lambda e_6$ & $e_1e_4=e_5+ 2e_6$ & $e_2 e_1= e_4$  \\ 
&&$e_2e_2=e_6$& $e_2e_3=-2e_5$  & $e_3e_1=e_6$ & $e_3e_2=2e_5$ & $e_4e_1=-e_5$  \\

\hline ${\mathcal N}^6_{12}$ & :& 
$e_1 e_1 = e_2$ & $e_1 e_2=e_3$&  $e_1 e_3=e_4-2e_5$ & $e_1e_4=e_5$& $e_2 e_1= e_3 + e_4+e_6$  \\ 
&&  $e_2 e_2= e_4$&  $e_2e_3=e_5$ & $e_3 e_1= e_4$ & $e_3e_2=e_5$& $e_4e_1=e_5$\\

\hline ${\mathcal N}^6_{13}$ &:& $e_1 e_1 = e_2$  & $e_1e_2=e_4$ & $e_1e_3=e_5$ & $e_1e_4=e_6$ &$e_1e_5=2e_6$  \\ 
&& $e_2 e_1=e_3$ & $e_2e_3=-e_6$ & $e_3e_1=-e_5$ & $e_5e_1=-e_6$ &\\

\hline ${\mathcal N}^6_{14}(\alpha)$ &:& 
$e_1 e_1 = e_2$  & $e_1e_2=e_3$ & $e_1e_3=e_5+e_6$ & $e_1e_4=\alpha e_6$ &\\ 
&& $e_1e_5=e_6$ & $e_2 e_1=e_3+e_4$ & $e_2e_2=e_5$ & $e_2e_3=e_6$ &\\ 
&& $e_3e_1=e_5$ & $e_3e_2=e_6$  & $e_4e_1=-(\alpha+1)e_6$ & $e_5e_1=e_6$ & \\


\hline ${\mathcal N}^6_{15}$ &:& 
$e_1 e_1 = e_2$  & $e_1e_2=e_3$ & $e_1e_3=-e_5 + e_6$ & $e_1e_5=-3 e_6$ &\\  
&& $e_2 e_1= 3 e_3+e_4$  & $e_2e_2= 3 e_5$ & $e_2e_3=-3 e_6$ &  $e_3e_1=3 e_5$ &\\
&& $e_3e_2=3 e_6$  & $e_4e_1=-3 e_6$ & $e_5e_1=3 e_6$ &&\\

\hline ${\mathcal N}^6_{16}(\lambda)_{\lambda\neq3}$ &:& 
$e_1 e_1 = e_2$  & $e_1e_2=e_3$ & $e_1e_3=(2-\lambda)e_5 $ & $e_1e_4= e_6 $ &\\ 
& & $e_1e_5=(3-2\lambda)e_6$ & $e_2 e_1=\lambda e_3+e_4$  & $e_2e_2=\lambda e_5$ & $e_2e_3=\lambda(2-\lambda) e_6$ &\\
& & $e_3e_1=\lambda e_5$ & $e_3e_2=\lambda e_6$  & $e_4e_1=- e_6$ & $e_5e_1=\lambda e_6$ &\\

\hline ${\mathcal N}^6_{17}(\alpha)$ &:& 
$e_1 e_1 = e_2$  & $e_1e_2=e_3$ & $e_1e_3=2e_5$ & $e_1e_4= e_6$ &\\ 
&& $e_1e_5=\alpha e_6$ & $e_2 e_1=e_4$ &  $e_2e_3=2e_6$ &  $e_3e_1=e_6$ &\\  
&& $e_3e_2=-2 e_6$  & $e_4e_1=-e_6$ & $e_5e_1= e_6$ &&\\

\hline ${\mathcal N}^6_{18}(\alpha)$ &:& 
$e_1 e_1 = e_2$  & $e_1e_2=e_3$ & $e_1e_3=2e_5$ & $e_1e_4= (\alpha+2)e_6$ & $e_1e_5=e_6$ \\ 
&& $e_2 e_1=e_4$ & $e_2e_2=e_6$ &  $e_3e_1=e_6$ & $e_4e_1=-\alpha e_6$ &
\\

\hline ${\mathcal N}^6_{19}(\alpha, \beta)$ &:& 
$e_1 e_1 = e_2$  & $e_1e_2=e_3$ & $e_1e_3=2e_5$ & $e_1e_4= (\alpha+2)e_6$ & \\ 
&& $e_1e_5=\beta e_6$ & $e_2 e_1=e_4$ & $e_2e_2=e_6$ & $e_2e_3=2e_6$ &\\
&& $e_3e_1=e_6$ & $e_3e_2=-2 e_6$  & $e_4e_1=-\alpha e_6$ & $e_5e_1= e_6$ &\\


\hline ${\mathcal N}^6_{20}(\lambda, \mu)$ & : &
$e_1 e_1 = e_2$ &    $e_1e_2=e_3$ & $e_1 e_3=\lambda e_5$ & $e_1e_4=\mu e_5$ & $e_1e_5=(2\mu-1) e_6$\\
&& $e_2 e_1=e_4$ & $e_2 e_2=e_5$ &  $e_2 e_3=\lambda (2-\mu)e_6$ &  $e_2 e_4=\mu (2-\mu) e_6$ & \\
&& $e_3 e_1= e_5$&  $e_4e_1=(2-\mu)e_5$ & $e_3 e_2=\Big(3-(\lambda+1)(2-\mu) \Big)e_6$ & $e_4 e_2= (2-\mu)e_6$ & $e_5 e_1=(2-\mu)e_6$ \\
\hline
${\mathcal N}^6_{21}(\lambda, \mu)_{\mu\neq2}$ & : &
$e_1 e_1 = e_2$ &    $e_1e_2=e_3$ & $e_1 e_3=\lambda e_5 +e_6$ & $e_1e_4=\mu e_5$ & $e_1e_5=(2\mu-1) e_6$\\
&& $e_2 e_1=e_4$ & $e_2 e_2=e_5$ &  $e_2 e_3=\lambda(2-\mu) e_6$ &  $e_2 e_4=\mu(2-\mu) e_6$ & \\
&& $e_3 e_1= e_5$ &  $e_4e_1=(2-\mu)e_5$ & $e_3 e_2=\Big(3-(\lambda+1)(2-\mu)\Big)e_6$ & $e_4 e_2= (2-\mu) e_6$ & $e_5 e_1= (2-\mu)e_6$ \\
\hline

${\mathcal N}^6_{22}(\lambda)$ & : &
$e_1 e_1 = e_2$ &    $e_1e_2=e_3$ & $e_1 e_3=\lambda e_5$ & $e_1e_4=e_5+e_6$ & $e_1e_5= e_6$\\
&& $e_2 e_1=e_4$ & $e_2 e_2=e_5$ &  $e_2 e_3=\lambda e_6$ &  $e_2 e_4=  e_6$ & \\
&& $e_3 e_1= e_5$&  $e_4e_1= e_5-e_6$ & $e_3 e_2=(2-\lambda)e_6$ & $e_4 e_2= e_6$ & $e_5 e_1=e_6$ \\
\hline

${\mathcal N}^6_{23}(\nu)_{\nu\neq0}$ & : &
$e_1 e_1 = e_2$ &    $e_1e_2=e_3$ & $e_1 e_3= e_5 +  \nu e_6$ & $e_1e_4=e_5+e_6$ & $e_1e_5= e_6$\\
&& $e_2 e_1=e_4$ & $e_2 e_2=e_5$ &  $e_2 e_3= e_6$ &  $e_2 e_4=  e_6$ & \\
&& $e_3 e_1= e_5$&  $e_4e_1= e_5-e_6$ & $e_3 e_2=e_6$ & $e_4 e_2= e_6$ & $e_5 e_1=e_6$ \\
\hline
${\mathcal N}^6_{24}(\lambda)$ & : &
$e_1 e_1 = e_2$ &    $e_1e_2=e_3$ & $e_1 e_3=\lambda e_5$ & $e_1e_4=2 e_5 + e_6$ & $e_1e_5=e_6$\\
&& $e_2 e_1=e_4$ & $e_2 e_2=e_5$ & $e_3 e_1= e_5$&  $e_3e_2=e_6$ &\\
\hline
 ${\mathcal N}^6_{25}(\lambda)_{\lambda \neq 0}$ & : &
$e_1 e_1 = e_2$ &    $e_1e_2=e_3$ & $e_1 e_3=e_5$ & $e_1e_4=\lambda e_5$ & $e_1e_5=-2 e_6$\\
&& $e_2 e_1=e_4$ &  $e_2e_3=e_6$ & $e_2e_4=\lambda e_6$ & $e_3e_2= - e_6$ & \\
&& $e_4 e_1=-\lambda e_5$&  $e_5e_1=e_6$ & & & \\
\hline
${\mathcal N}^6_{26}(\lambda)_{\lambda \neq 0}$ & : &
$e_1 e_1 = e_2$ &    $e_1e_2=e_3$ & $e_1 e_3=e_5$ & $e_1e_4=\lambda e_5 +e_6$ & $e_1e_5=-2 e_6$\\
&& $e_2 e_1=e_4$ &  $e_2e_3=e_6$ & $e_2e_4=\lambda e_6$ & $e_3e_2= - e_6$& \\ 
& & $e_4 e_1=-\lambda e_5-e_6$ & $e_5e_1=e_6$ & & & \\

\hline ${\mathcal N}^6_{27}$ &:&  
$e_1 e_1 = e_2$ &    $e_1 e_3=e_4$ & $e_1e_4=2e_5$ & $e_1e_5=3e_6$ &\\ 
&& $e_2 e_1=e_3$ & $e_2e_3=-e_5$ & $e_2e_4=-2e_6$ & $e_3 e_1=-e_4$  &\\
&& $e_3e_3=e_6$ & $e_4e_1=-e_5$ & $e_5e_1=-e_6$ && \\

\hline ${\mathcal N}^6_{28}$ & : &
$e_1 e_1 = e_2$ &    $e_1e_2=e_5$ & $e_1 e_3=e_4$ & $e_1e_4=2e_5$& $e_1e_5=3e_6$\\ 
&& $e_2 e_1=e_3$ & $e_2e_2=-e_6$ & $e_2e_3=-e_5$ & $e_2e_4=-2e_6$ &  $e_3 e_1=-e_4-2e_6$\\
&& $e_3e_3=e_6$  & $e_4e_1=-e_5$ & $e_5e_1=-e_6$ &&\\

\hline ${\mathcal N}^6_{29}(\lambda)$ & : &
$e_1 e_1 = e_2$&   $e_1 e_2=e_3$&    $e_1 e_3=(2-\lambda)e_4$ & $e_1e_4=(3-2\lambda) e_5$ & 
$e_1e_5=(4-3\lambda) e_6$ \\
&& $e_2 e_1= \lambda e_3$ & $e_2e_2= \lambda e_4$ & $e_2e_3=\lambda (2-\lambda )e_5$ & $e_2e_4=\lambda (3-2\lambda )e_6$ & $e_3e_1=\lambda e_4$ \\
&& $e_3e_2=\lambda e_5$ &  $e_3e_3=\lambda(2-\lambda) e_6$ & $e_4e_1=\lambda e_5$ & $e_4e_2=\lambda e_6$&  $e_5e_1=\lambda e_6$ \\

\hline ${\mathcal N}^6_{30}$ & : &
$e_1 e_1 = e_2$&   $e_1 e_2=e_3$& $e_1e_4=-e_5$ & $e_1e_5=-2e_6$ &\\
&& $e_2 e_1= 2 e_3 + e_6$ & $e_2e_2= 2 e_4$ & $e_2e_4=- e_6$ & $e_3e_1=2 e_4$ &\\
&& $e_3e_2=2 e_5$ &  $e_4e_1=2 e_5$ & $e_4e_2=2 e_6$&  $e_5e_1=2 e_6$ &\\

\hline ${\mathcal N}^6_{31}$ &: & 
$e_1 e_1 = e_2$&  $e_1 e_2=e_3$& $e_1 e_3=2e_4$& $e_1e_4=3e_5$ &\\
&& $e_1e_5=e_6$ & $e_2 e_3=2e_6$& $e_3e_2=-2e_6$ & $e_4e_1=e_6$& \\

 \hline ${\mathcal N}^6_{32}$ &: & 
$e_1 e_1 = e_2$&  $e_1 e_2=e_3$& $e_1 e_3=2e_4$&& \\ 
&& $e_1e_4=3e_5$ & $e_1e_5=e_6$& $e_2e_1=e_6$ && \\
\hline
 ${\mathcal N}^6_{33}$ & : &
$e_1 e_1 = e_2$&   $e_1 e_2=e_3$&    $e_1 e_3=e_4$ & $e_1e_4= e_5$ &$e_1e_5=e_6$ \\
&& $e_2 e_1= e_3 + e_6$ &  $e_2e_2=  e_4$ & $e_2e_3=e_5$ & $e_2e_4=e_6$ & $e_3e_1= e_4$ \\
&& $e_3e_2=e_5$ &  $e_3e_3= e_6$ & $e_4e_1= e_5$ & $e_4e_2= e_6$&  $e_5e_1= e_6$ \\

\hline ${\mathcal N}^6_{34}(\lambda)$ & : & 
$e_1 e_1 = e_2$ &   $e_1 e_2=e_3$ & $e_1 e_3=e_4$& $e_1e_4=e_5$&\\ 
&& $e_1e_5=2e_6$& $e_2e_1=e_5 + \lambda e_6$ & $e_2e_2=e_6$& $e_3e_1=e_6$&\\ 

  \hline ${\mathcal N}^6_{35}(\lambda)_{\lambda \neq 0}$  & : & 
$e_1 e_1 = e_2$ &   $e_1 e_2=e_3$ & $e_1 e_3=e_4$& $e_1e_4=e_5$&\\  
&& $e_1e_5=2e_6$ & $e_2e_1=e_5$ & $e_2e_2=e_6$ & $e_2e_3=\lambda e_6$&\\
&& $e_3e_1=e_6$ & $e_3e_2=-\lambda e_6$& $e_4e_1=\lambda e_6$ &&\\

\hline ${\mathcal N}^6_{36}$& : & 
$e_1 e_1 = e_2$&   $e_1 e_2=e_3$&    $e_1 e_3=e_4-2e_6$ & $e_1e_4= e_5$ & $e_1e_5=e_6$ \\ 
&& $e_2 e_1= e_3+e_5$ & $e_2e_2= e_4 $& $e_2e_3=e_5$  &$e_2e_4=e_6$&  $e_3e_1=e_4$ \\
&& $e_3e_2=e_5$ & $e_3e_3=e_6$&  $e_4e_1=e_5 $& $e_4e_2=e_6$ & $e_5e_1=e_6$  \\

\hline ${\mathcal N}^6_{37}(\lambda, \mu)$ & :& 
$e_1 e_1 = e_2$&   $e_1 e_2=e_3$&    $e_1 e_3=2e_4 + \lambda e_5$&  $e_1e_4=2e_5 +\mu e_6$&\\ 
&&$e_1e_5=e_6$ & $e_2 e_1= e_4$ & $e_2e_2=e_5$& $e_2e_3=-2\mu e_6$&\\
&& $e_3e_1=e_5$ & $e_3e_2=(1+2\mu) e_6$&  $e_4e_1=-\mu e_6$ &&\\

\hline ${\mathcal N}^6_{38}(\lambda)$ & :& 
$e_1 e_1 = e_2 $& $e_1 e_2=e_3$&  $e_1 e_3=e_4-2e_5+(1-2\lambda)e_6$ & $e_1e_4=e_5-2e_6$& $e_1e_5=e_6$  \\ 
&& $e_2 e_1= e_3 + e_4+\lambda e_5$ &  $e_2 e_2= e_4$&  $e_2e_3=e_5-e_6$ &$e_2e_4=e_6$&   $e_3 e_1= e_4$  \\ 
&& $e_3e_2=e_5$& $e_3e_3=e_6$& $e_4e_1=e_5+e_6$ & $e_4e_2=e_6$& $e_5e_1=e_6$  \\
\hline 
\end{longtable}} 

\[\begin{array}{lllll} {\mathcal N}^6_{16}(3) \cong {\mathcal N}^6_{05}(3), &
  {\mathcal N}^6_{21}(\lambda,2)\cong {\mathcal N}^6_{20}(\lambda,2),
 & {\mathcal N}^6_{23}(0)\cong {\mathcal N}^6_{22}(1), & {\mathcal N}^6_{25}(0)\cong {\mathcal N}^6_{06}(-2), \\ {\mathcal N}^6_{26}(0)\cong {\mathcal N}^6_{17}(-2),& {\mathcal N}^6_{35}(0) \cong{\mathcal N}^6_{34}(0), 
& {\mathcal N}^6_{34}(\lambda)\cong{\mathcal N}^6_{34}(-\lambda), 
&  {\mathcal N}^6_{35}(\lambda)\cong{\mathcal N}^6_{35}(-\lambda). \end{array}\]


\end{document}